\numberwithin{equation}{section}
\theoremstyle{plain}
\theoremstyle{remark}
\def \1{{\bf 1}}
\def\C{{\mathbb{C}}}
\def\Z{{\mathbb{Z}}}
\def\R{{\mathbb{R}}}
\def\E{\mathbb{E}}
\def\gr{\mathrm{gr}}
\def\End{\mathrm{End}}
\def\g{\mathfrak{g}}
\def\wt{\mathrm{wt}}
\def\Aut{\mathrm{Aut}}
\newcommand{\F}{\mathbb{F}}
\newcommand{\K}{\mathbb{K}}
\def\<{\langle}
\def\>{\rangle}
\theoremstyle{definition}
\newtheorem{lemma}{Lemma}[section]
\newtheorem{theorem}[lemma]{Theorem}
\newtheorem{corollary}[lemma]{Corollary}
\newtheorem{proposition}[lemma]{Proposition}
\newtheorem{definition}[lemma]{Definition}
\newtheorem{example}[lemma]{Example}
\newtheorem{remark}[lemma]{Remark}
\title{Endomorphism property of vertex operator algebras over arbitrary fields}
\author{Chao Yang}
\affil{ School of Mathematics,  Sichuan University,
	Chengdu 610064 China}
\author{Jianqi Liu}
\affil{Department of Mathematics, University of
California, Santa Cruz, CA 95064 USA}
\begin{document}
\maketitle
\abstract
In this paper, we study the endomorphism properties of vertex operator algebras over an arbitrary field $\F$, with  $\text{Char}(\F) \neq 2$.
Let $V$ be a strongly finitely generated vertex operator algebra over $\F$, and $M$ be an irreducible admissible $V$-module.
We prove that every element in $\text{End}_V(M)$ is algebraic over $\F$ and that $\text{End}_V(M)$ is also finite-dimensional.
As an application, we prove Schur's lemma for strongly finitely generated vertex operator algebras over arbitrary algebraically closed fields, and we give a test for absolute irreducibility of $V$-modules.


\section{Introduction}

The study of the endomorphism properties for associative algebras has a long history and rich results(see \cite{D, DS,MR,Q} etc.).
If the associative algebra $A$ is defined over an algebraically closed field
and $S$ is a finite dimensional simple $A$-module, the famous Schur's Lemma tells us that $\End_{A}(S)$ is one dimensional.
However, when the base field $\F$ is not algebraically closed, or $\text{dim}_F(S)$ is not finite,
$\End_{A}(S)$ is not so easy to characterize. One of the most elegant results, in this case, is Quillen's Theorem \cite{Q}, which says that each element in $\text{End}_A(S)$ is algebraic over $\F$, provided that $A$ is a filtered algebra whose associated graded algebra $\text{gr}(A)$ is finitely generated and commutative.

In some special cases, the endomorphism ring $\text{End}_A(S)$ is finite-dimensional. For example, if $A=U(\mathfrak{g})$ is the universal enveloping algebra of a finite dimensional lie algebra $\mathfrak{g}$ over a field $\F$ with $\text{char}(\F)=0$, it is shown in \cite{D} that $\text{End}_A(S)$ is finite dimensional; if $A$ is a constructible algebra over an arbitrary field $\F$, it is shown in \cite{MR} that $\text{End}_A(S)$ is also finite dimensional. Moreover, if $S$ is a topologically irreducible admissible Banach space representation
of a $\mathbb{Q}_p$-analytic group $G$ over $\F$, where $\F$ is a finite extension of $\mathbb{Q}_p$, then it is shown in \cite{DS} that the algebra $\rm{End}_{\F[G]}(S)$ of continuous $G$-equivariant
endomorphisms of $S$ is a finite dimensional division algebra over $\F$. An associative algebra over $\F$ such that $\End_{A}(S)$ is finite-dimensional for all simple modules $S$ is said to admit the (strong) endomorphism property.

In the theory of vertex operator algebras (VOAs) (\cite{DLM1,LL} etc.), when the base field is $\C$, the ``infinite dimensional Schur's Lemma" due to Dixmier \cite{D} which says that $\End_{A}(M)=\C \text{Id}_{M}$ if $M$ is of countable dimension is commonly used in the proof of the semisimplicity of the Hamiltonian $L(0)$ on an irreducible admissible module $M$ over a vertex operator algebra $V$ (cf. \cite{DLM,DLM1,Z}.) In recent years, the vertex operator algebras defined over an arbitrary field $\F$ has brought much attention. A series of studies on vertex operator algebras over arbitrary fields can be found in (\cite{DR1, DR2, R, JLM, LM1,LM2} etc.) However, since the usual Schur's lemma no longer holds when $\F$ is not uncountable, a natural question is what kind of endomorphism properties are possessed by a vertex operator algebra over an arbitrary field $\F$. The following Theorem is our main result.

\begin{theorem} \label{1main1}
Let $V$ be a strongly finitely generated vertex operator algebra over an arbitrary field $\F$,
and $M$ be an irreducible admissible $V$-module. Then
\begin{enumerate}[{(1)}]

\item Every element in $\text{End}_V(M)$ is algebraic over $\F.$
 Moreover,  $\text{End}_V(M)$ is also finite-dimensional over $\F$.

\item (Schur's lemma) If $\F$ is an algebraic closed field, then $\text{End}_V(M) \cong \F$, and
 there exists a $\lambda \in \F$ such that $L(0)|_{M(n)}=\lambda +n$ for all $n$.

\item  $M$ is an absolutely irreducible $V$-module if and only if  $\text{End}_V(M) \cong \F.$
\end{enumerate}
\end{theorem}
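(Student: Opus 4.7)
The overall plan is to prove part (1) directly, and then deduce parts (2) and (3) from it by division-algebra and descent arguments.

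For part (1), I would apply Quillen's theorem (quoted in the introduction) to a filtered associative algebra sandwiched between $V$ and $\End_{\F}(M)$. Let $\U$ be the subalgebra of $\End_{\F}(M)$ generated by all modes $v_{n}$, $v \in V$, $n \in \Z$. Li's canonical decreasing filtration on $V$, transported to $\U$, gives an associated graded algebra $\gr(\U)$ that is commutative (since the singular OPE terms vanish after passing to $\gr$). Strong finite generation of $V$ by a finite set of strong generators means that, combined with admissibility of $M$ (which forces all but finitely many modes to act trivially on each $M(n)$), the commutative algebra $\gr(\U)$ is finitely generated. Since $M$ is an irreducible $V$-module, it is also simple as a $\U$-module, and $\End_V(M) \subseteq \End_{\U}(M)$. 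Quillen's theorem then gives that every element of $\End_V(M)$ is algebraic over $\F$.

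For the finite-dimensionality half of (1), I would pass to the top level and Zhu's algebra $A(V)$. The aim is to construct an injection $\End_V(M) \hookrightarrow \End_{A(V)}(M(0))$ by restriction, using that $M$ is generated as a $V$-module by $M(0)$ (so endomorphisms are determined by their restriction) and the structure of irreducible admissible modules to force degree-preservation. Because $\dim_{\F} M(0) < \infty$, the target is finite-dimensional, which would give the claim. I expect \emph{this} to be the main technical obstacle: showing that an arbitrary $f \in \End_V(M)$ either preserves $M(0)$ or can be replaced (after a grading-shift argument or by decomposing into graded components) by one that does, so that the restriction map is well-defined and injective.

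For part (2), if $\F$ is algebraically closed, then for any $f \in \End_V(M)$ the subring $\F[f]$ is a finite field extension of $\F$ (being an algebraic commutative domain inside the division ring $\End_V(M)$), hence equals $\F$. This forces $\End_V(M) = \F \cdot \mathrm{Id}_M$. To show $L(0)|_{M(n)} = \lambda + n$, restrict $L(0)$ to the finite-dimensional $M(0)$, pick an eigenvalue $\lambda \in \F$, and set
\[
N(n) \;=\; \ker\!\bigl(L(0) - (\lambda+n)\,\mathrm{Id}\bigr)\big|_{M(n)}, \qquad N = \bigoplus_{n\ge 0} N(n).
\]
The commutation $[L(0), v_k] = (\wt(v)-1-k)v_k$ together with the admissible-module shift $v_k : M(n) \to M(n+\wt(v)-1-k)$ shows $N$ is a nonzero $V$-submodule. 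Irreducibility of $M$ then gives $N = M$, as required.

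For part (3), the implication $M$ absolutely irreducible $\Rightarrow \End_V(M) \cong \F$ follows from applying part (2) to $M \otimes_{\F} \overline{\F}$ over $V \otimes_{\F} \overline{\F}$, combined with the injection $\End_V(M) \otimes_{\F} \overline{\F} \hookrightarrow \End_{V\otimes \overline{\F}}(M \otimes \overline{\F})$, which is an isomorphism because $\End_V(M)$ is finite-dimensional by part (1). For the converse, assume $\End_V(M) = \F$; then $\End_{V\otimes \overline{\F}}(M \otimes \overline{\F}) = \overline{\F}$ by the same finite-dimensional base change, and a standard argument (for instance via the density theorem applied to the simple factors of $M \otimes \overline{\F}$, noting that their endomorphism rings are each $\overline{\F}$) shows $M \otimes \overline{\F}$ is irreducible, i.e.\ $M$ is absolutely irreducible. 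The main conceptual point throughout is that the finite-dimensionality conclusion of (1) is exactly what makes the base-change behavior of $\End_V(-)$ manageable.
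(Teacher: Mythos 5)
Your proposal contains two genuine gaps, both in part (1). First, the finite-dimensionality argument is circular: you conclude $\dim_\F \End_V(M) < \infty$ from $\dim_\F M(0) < \infty$, but the paper's definition of an admissible module (Definition \ref{df2.5}) places no finiteness condition on the graded pieces, and finite-dimensionality of $M(0)$ for an irreducible admissible module over an arbitrary field is not known a priori --- it is essentially one of the consequences one hopes to extract from results of this type. If $M(0)$ were known to be finite-dimensional, the entire content of Proposition \ref{finite} would be vacuous, since $\End_{A(V)}(M(0)) \subseteq \End_\F(M(0))$ would be trivially finite-dimensional. The paper instead proves a strengthened form of Quillen's theorem for finite-type filtered algebras: $D=\End_{A(V)}(M(0))$ is a division ring all of whose elements are algebraic over $\F$; a maximal subfield $\E$ of $D$ is a finitely generated field extension of $\F$ (via noetherianity of $A(V)^\E$, Lemmas \ref{Noteherian2} and \ref{f-extension}), hence a finite extension, and then Lemma \ref{division-ring} bounds $\dim_\F D$. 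None of this appears in your proposal, and without it the second half of (1) is unproved.

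Second, your route to algebraicity via the full mode algebra $\U \subseteq \End_\F(M)$ equipped with (a transport of) Li's filtration does not deliver the hypothesis of Quillen's theorem. Under Li's filtration, strong finite generation of $V$ makes the associated graded object finitely generated only as a \emph{differential} commutative algebra: a single strong generator $u$ contributes the infinite family of modes $u_{-1}, u_{-2}, u_{-3}, \dots$, whose symbols are algebraically independent generators of $\gr(\U)$ in general. Admissibility of $M$ does not remedy this, since the negative modes act nontrivially on every graded piece. So $\gr(\U)$ is not finitely generated as a commutative algebra and Lemma \ref{quillen} does not apply to $\U$. The paper circumvents exactly this obstruction by passing to Zhu's algebra: $\End_V(M)\cong\End_{A(V)}(M(0))$ (Corollary \ref{Anv3}), and the surjection $R(V)=V/C_2(V)\twoheadrightarrow \gr A(V)$ kills the higher negative modes, so that strong finite generation of $V$ does yield finite generation of the commutative algebra $\gr A(V)$ (Lemma \ref{RV}). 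Your treatments of parts (2) and (3) are essentially sound and close in spirit to the paper's (the base-change isomorphism for $\End$ in (3) needs $A(V)$ noetherian, or the Jacobson density argument the paper uses, rather than just finite-dimensionality of $\End_V(M)$), but both rest on part (1), which as written is not established.
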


In this theorem, part (1) can be viewed as the analog of Quillen's theorem in the context of the vertex operator algebras. Schur's lemma and the characterization of the absolutely irreducible modules are natural corollaries of Quillen's theorem in (1). Although we imposed the condition that $V$ is strongly finitely generated in Theorem \ref{1main1}, we note that this is not a strong condition. In fact, almost all the classic examples of vertex operator algebras over an arbitrary field, including the irrational ones, like vacuum module VOA $V_{\hat{\g}}(k,0)$ and the (universal) Virasoro VOA $\bar{V}(c,0)$ \cite{FZ}, are strongly finitely generated \cite{DR2,JLM,LM1}. Furthermore, for a strongly finitely generated VOA $V$, many examples indicate that the orbifold $V^G$ of a reductive group $G\leq \Aut(V)$ is also strongly finitely generated (see \cite{LA,CL}.)
With Theorem \ref{1main1}, we can prove that the conjecture proposed in \cite{DR1}, which states that the Hamiltonian $L(0)$ acts semi-simply on an irreducible admissible $M$ without the uncountable condition for $\F$ is indeed true. 

To prove Theorem \ref{1main1}, we use the (generalized) Zhu's algebra $A_n(V)$ defined in \cite{DLM,R,Z} and transform the study of $\End_{V}(M)$ into
the study of the endomorphism property of an $A_n(V)$-modules. We will also use the fact that there exists an epimorphism between Zhu's $C_{2}$-algebra $R(V)=V/C_{2}(V)$ and $\text{gr}A(V)$ \cite{A,CJL,Liu}, where $\gr A(V)$ is the associated graded algebra of the level filtration on $A(V)$, and transform the strong generated property of $V$ into a finite generation property of $\gr A(V)$.
More precisely, we have the following results toward the proof of Theorem \ref{1main1}:

\begin{theorem}\label{1main2}
\begin{enumerate}[{(1)}]
\item  Let $V$ be a vertex operator algebra over a field $\F$, and let $M=\bigoplus_{i \geq 0}M(i)$ be an irreducible admissible $V$-module.
Then $\text{End}_V(M) \cong \text{End}_{A(V)}(M(0))$.

\item  Let $V$ be a strongly finitely generated vertex operator algebra over a field $\F$.
Then $A(V)$ is a filtered algebra such that the associated graded algebra $\text{gr}A(V)$ is a finitely generated commutative algebra.

\item Let $A$ be a filtered algebra such that the associated graded algebra $\text{gr}(A)$ is a finitely generated commutative algebra.
Then for each simple $A$-module $S$, every element in $\text{End}_A(S)$ is algebraic over $\F$.
Moreover,  $\text{End}_V(S)$ is finite dimensional.
\end{enumerate}
\end{theorem}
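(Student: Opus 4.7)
The plan is to establish the three parts in order, since (3) is the technical core, (2) provides the bridge from VOA hypotheses to the Quillen framework, and (1) translates the problem from $V$-modules to $A(V)$-modules.

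For part (1), I would leverage the Zhu and Dong--Li--Mason correspondence between irreducible admissible $V$-modules and irreducible $A(V)$-modules. Any $\phi \in \End_V(M)$ splits into homogeneous components $\phi = \sum_{k \in \Z} \phi_k$ with $\phi_k(M(n)) \subseteq M(n+k)$, each of which is individually $V$-linear---comparing degrees on both sides of $\phi(v_m x) = v_m \phi(x)$ forces the components to commute with the $V$-action separately. The negative-degree components vanish because $M(n) = 0$ for $n<0$ and $M$ is $V$-generated by $M(0)$. To eliminate positive-degree components, one uses the universal property of the generalized Verma module $\bar{M}(M(0))$: $M$ is its unique irreducible graded quotient, and a positive-degree $V$-linear endomorphism would factor through a nonzero $A(V)$-homomorphism $M(0) \to M(k)$ that extends to a graded $V$-endomorphism; simplicity forces this to vanish. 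The surviving $\phi_0$ restricts to an $A(V)$-endomorphism of $M(0)$, yielding the restriction map $\End_V(M) \to \End_{A(V)}(M(0))$, whose inverse is the unique lift provided by the Verma construction.

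For part (2), I equip $A(V)$ with the level filtration $F_n A(V) := \pi(F_n V)$, where $F_n V = \bigoplus_{i \leq n} V_i$ and $\pi : V \twoheadrightarrow A(V)$ is the canonical projection. A direct computation with Zhu's product formula verifies $F_m A(V) * F_n A(V) \subseteq F_{m+n} A(V)$ and that commutators strictly lower the filtration degree, so $\gr A(V)$ is a commutative graded algebra. Invoking the surjection $R(V) = V/C_2(V) \twoheadrightarrow \gr A(V)$ established in the references cited in the excerpt reduces finite generation of $\gr A(V)$ to that of $R(V)$. Since $V$ is strongly finitely generated by, say, $u_1, \ldots, u_r$, their images in $R(V)$ generate it as a commutative $\F$-algebra, and the surjection transfers this property to $\gr A(V)$.

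For part (3), the Quillen-style core, fix $\phi \in \End_A(S) =: D$ and form the subalgebra $B = A[\phi] \subseteq \End_\F(S)$. Since $\phi$ commutes with $A$, extending the filtration by placing $\phi$ in degree $0$ makes $B$ a filtered algebra with a surjection $(\gr A)[\phi] \twoheadrightarrow \gr B$ preserving commutativity. The module $S$ remains simple over $B$; for any cyclic vector $s$, the finite generation of $\gr A$ bounds the $\F$-dimension of $S = A\cdot s$ to be at most countable. To prove algebraicity of $\phi$, I would follow Quillen: if $\phi$ were transcendental, $\F[\phi]$ would inject into $D$, forcing $\gr B$ to contain a polynomial extension while remaining a finitely generated commutative ring acting faithfully on a cyclic module, contradicting Noether normalization or a Hilbert-series dimension count. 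For the finite-dimensionality of $D$, the Jacobson density theorem combined with the finite generation of $A$ as an $\F$-algebra (a consequence of finite generation of $\gr A$) shows that $D$ itself is finitely generated as an $\F$-algebra; algebraicity then forces $\dim_\F D < \infty$ via a Zariski-lemma style argument adapted to division rings. The main obstacle is making the transcendence argument work over an arbitrary, possibly countable and non-algebraically-closed $\F$, where the classical Dixmier trick using the $\F$-linearly independent family $\{(\phi - \lambda)^{-1}\}_{\lambda \in \F}$ is unavailable; one must instead exploit the commutative structure of $\gr B$ directly, which is precisely what the finite-generation hypothesis on $\gr A$ enables.
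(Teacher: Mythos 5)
Your parts (1) and (2) are essentially the paper's own arguments: for (1) the paper proves Corollary \ref{Anv3} by showing the restriction map $\End_V(M)\to\End_{A(V)}(M(0))$ is injective (irreducibility) and surjective (the universal property of $\mathcal{M}_0(M(0))$ and $L_0$), which is what your homogeneous-component analysis plus the Verma lift reduces to; for (2) the paper uses exactly the level filtration, the surjection $R(V)\twoheadrightarrow\gr A(V)$, and the computation $a_{-n}b_{-2}c\equiv b_{-2}a_{-n}c\pmod{C_2(V)}$ showing that a finite strong generating set of $V$ generates $R(V)$ (Lemma \ref{RV}). Your algebraicity sketch in (3) is in the spirit of Quillen's original proof, which the paper simply quotes as Lemma \ref{quillen}, so I will not press on the vagueness of ``Noether normalization or a Hilbert-series dimension count'' there.

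The genuine gap is in your proof that $D=\End_A(S)$ is finite-dimensional, which is the actual content of Proposition \ref{finite}. You claim that Jacobson density together with finite generation of $A$ makes $D$ a finitely generated $\F$-algebra. Density only says that $A$ maps densely into $\End_D(S)$; unless $\dim_D S<\infty$ (which you have not established and which is not available a priori) there is no surjection onto a matrix ring from which to extract $D$, and even then corner rings of finitely generated algebras need not be finitely generated. Worse, your final step --- that a division algebra which is finitely generated as an $\F$-algebra and algebraic over $\F$ must be finite-dimensional --- is not a ``Zariski-lemma style argument'': for noncommutative division rings this is essentially the Kurosh problem, which is open in general. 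The paper's route avoids both difficulties. Let $\K$ be the center of $D$ and $\E$ a maximal subfield of $D$. Since $\gr(A^{\E})\cong\E\otimes_\F\gr A$ is a finitely generated commutative algebra, $A^{\E}$ is noetherian (Lemma \ref{Noteherian2}); the McConnell--Robson lemma (Lemma \ref{f-extension}), applied to the finitely generated $A$-module $S$ and the subfield $\E\subseteq\End_A(S)$, then shows that $\E/\F$ is a finitely generated field extension, hence a finite extension once Quillen's theorem gives algebraicity. Finally $\dim_{\K}\E<\infty$ forces $\dim_\F D<\infty$ by the maximal-subfield lemma for division rings (Lemma \ref{division-ring}). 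Some version of this noetherian/maximal-subfield mechanism is indispensable; the finite generation and commutativity of $\gr A$ enter precisely there, not through a density argument.
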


This paper is organized as follows. Section 2 reviews the basics of vertex operator algebras over an arbitrary field. We introduce some notations and prove some basic results about scalar extensions of vertex operator algebra in section 3.
In section 4, we first recall the definition and properties of the generalized Zhu's algebra $A_n(V)$ in \cite{DLM, R}, and then prove an intermediate result about the endomorphism property of the finite-type filtered algebras.
In section 5 and section 6, we prove the main results of Theorem \ref{1main1} and Theorem \ref{1main2}.
Some applications of the main Theorems on the scalar extensions are discussed in section 7.

The authors are grateful to our supervisor Professor  Chongying Dong for valuable advice.

{\bf  Convention}: All the fields in this paper have characterization {\em not} equal to $2$.

\section{Basics}
In this section, we recall the basics of vertex operator algebras and modules \cite{DR1,LL,LM1,LM2}
over an arbitrary field $\F$ with $\text{char}(\F) \neq 2$.

\begin{definition}\label{df2.1}
A vertex algebra $(V, Y(-, z) , {\bf 1} )$ over a field $\F$ is a vector space equipped with
a linear map:
\begin{align*}
		Y(-, z):  V & \to (\mbox{End}\,V)[[z,z^{-1}]] ,\\
		& v\mapsto Y(v,z)=\sum_{n\in{\Z}}v_nz^{-n-1}\ \ \ \  (v_n\in
		\mbox{End}\,(V)),\nonumber
\end{align*}
and with a distinguished vector ${\bf 1} \in V$, satisfying the following conditions for any $u,v\in V$ and $m,n\in \Z$:
\begin{align*} \label{0a4}
		& u_nv=0\ \ \ \ \ {\rm for}\ \  n\ \ {\rm sufficiently\ large};  \\
		& Y({\bf 1},z)=Id_{V};  \\
		& Y(v,z){\bf 1}\in V[[z]]\ \ \ {\rm and}\ \ \
        \lim_{z\to 0}Y(v,z){\bf 1}=v;
	\end{align*}
and the Jacobi identity:
		\begin{align*}
			& \displaystyle{z^{-1}_0\delta\left(\frac{z_1-z_2}{z_0}\right)
				Y(u,z_1)Y(v,z_2)-z^{-1}_0\delta\left(\frac{z_2-z_1}{-z_0}\right)
				Y(v,z_2)Y(u,z_1)}\\
			& \displaystyle{=z_2^{-1}\delta
				\left(\frac{z_1-z_0}{z_2}\right)
				Y(Y(u,z_0)v,z_2)}.
		\end{align*}
\end{definition}

We denote the set of non-negative integers by $\Z_+$.

\begin{definition}\label{df2.2}
A vertex operator algebra $(V, Y(\ , z), {\bf 1}, \omega)$ over a field $\F$ is a $\Z_+$-graded vertex algebra $(V, Y(-, z), {\bf 1})$:
$$V=\bigoplus_{n \in \Z_{+}}V_n,$$
satisfying $\text{dim}V_n < \infty$ for all $n\in \Z_{+}$, ${\bf 1} \in V_0$, ${\omega} \in V_2$,
and the following conditions holds for any $u,v\in V$ and $m,n,s,t \in \Z$:
\begin{align*}
  & u_nv \in V_{s+t-n-1} \ \text{for} \ u\in V_s, v\in V_t; \\
  & [L(m),L(n)]=(m-n)L(m+n)+\frac{1}{12}(m^3-m)\delta_{m+n,0}c ;\\
  & \frac{d}{dz}Y(v,z)=Y(L(-1)v,z);\\
  & L(0)|_{V_n}=n,
\end{align*}

where $L(m)=\omega_{ m+1}$ for each $m\in \Z$,  that is,
	$Y(\omega,z)=\sum_{n\in\Z}L(n)z^{-n-2}$, and $c\in \F.$
	
	We denote a vertex operator algebra $(V,Y(-,z),\1,\omega)$ over a field $\F$ by $V$ for short. If $u \in V_n$, we call $n$ the degree of $u$ and write $\text{deg}u=n$.
\end{definition}

\begin{remark}
If $\text{char}(\F)=0$, the assumption $u_nv \in V_{s+t-n-1}$ in Definition \ref{df2.2} is a consequence of the
other axioms. In the case that $F$ is a field of finite characteristic $p$, $L(0)$ has only $p$
distinct eigenvalues on $V$, and this assumption is necessary.
\end{remark}


\begin{definition}
Let $V$ be a vertex operator algebra over a field $\F$.
A weak $V$-module $M$ is a vector space equipped with a linear map:
\begin{equation*}
		\begin{split}
			Y_M(-,z): V&\to {\rm End(M)}[[z,z^{-1}]]\\
			v&\mapsto\displaystyle{ Y_M(v,z)=\sum_{n\in \Z}v_nz^{-n-1}\ \ \ (v_n\in
				{\rm End(M)}}),
		\end{split}
	\end{equation*}
which satisfies the following properties:  for any $u, v\in V,$ and $w\in M$,
\begin{eqnarray*}
		& &u_lw=0~~~
		\mbox{for}~~~ l\gg 0;\label{vlw0}\\
		& &Y_M({\mathbf 1},z)=Id_M;\label{vacuum}
\end{eqnarray*}
\begin{align*}
			&\displaystyle{z^{-1}_0\delta\left(\frac{z_1-z_2}{z_0}\right)
				Y_M(u,z_1)Y_M(v,z_2)-z^{-1}_0\delta\left(\frac{z_2-z_1}{-z_0}\right)
				Y_M(v,z_2)Y_M(u,z_1)}\\
			&\displaystyle{=z_2^{-1}\delta\left(\frac{z_1-z_0}{z_2}\right)
				Y_M(Y(u,z_0)v,z_2)}.
\end{align*}
\end{definition}

\begin{definition}\label{df2.5}
An admissible $V$-module is a $\Z_+$-graded weak $V$-module $M$:
$$M=\bigoplus_{n \in \Z_+}M(n),$$
satisfying
$$u_n M(t) \in M(s+t-n-1)$$
for $u \in V_s$, $n\in \Z$ and $t \in \Z_+$.
\end{definition}

We also need the concept of strongly finitely generated vertex operator algebra (cf. \cite{L1}):
\begin{definition}
Let $V$ be a vertex operator algebra over a field ${\F}$, and let $U\subset V$ be a subset. $V$ is called strongly generated by $U$ if $V$ is spanned by elements of the form:
$$u^{1}_{-n_{1}}\dots u^{r}_{-n_{r}}{\bf 1},$$
where $u^{1},\dots,u^{r} \in U$ for $r \geq 0$, and $n_{i}\geq 1$ for all $i$.
If $V$ is strongly generated by a finite dimensional subspace, then $V$ is called strongly finitely generated.
\end{definition}

\begin{definition}
Let $V$ be a vertex operator algebra over a field $\F$.
\begin{enumerate}[{(1)}]

\item  $V$ is called rational if the admissible $V$-module category is semi-simple.

\item $V$ is called $C_2$-cofinite if $R(V)=V/C_2(V)$ is finite dimensional \cite{LM1}, where
$$C_2(V)=\text{span}\{v_{-n-2}v \ | \ u, v\in V, n\in \Z_+\}.$$
\end{enumerate}
\end{definition}

\begin{definition}\label{df2.8} (\cite{MR})
Let $A$ be an associative algebra over a field $\F$, $A$ is said to have the endomorphism property if for each simple $A$-module $S$, every element in $\text{End}_A(S)$ is algebraic over $\F$. $A$ is said to have the strong endomorphism property if for each simple $A$-module $S$,
the endomorphism algebra $\text{End}_A(S)$ is finite dimensional.
\end{definition}
We introduce a similar definition of endomorphism properties for vertex operator algebras over a field $\F$:
\begin{definition}\label{df2.9}
Let $V$ be a vertex operator algebra over a field $\F$. $V$ is said to have the {\bf endomorphism property} if for each irreducible admissible $V$-module $M$, every element in $\text{End}_V(M)$ is algebraic over $\F.$

$V$ is said to have the {\bf strong endomorphism property} if for each irreducible admissible $V$-module $M$, the endomorphism algebra $\text{End}_V(M)$ is finite dimensional over $\F$.
\end{definition}

\section{Scalar extensions}

In this section, we introduce some notations and prove some basic properties of scalar extension of vertex operator algebras, which will be used later.

Let $A$ be an associative algebra over a field $\F$, and $M$ be a (left) $A$-module. Let $\K/\F$ be a field extension. Define $A^{\K}=\K \otimes_{\F} A$ and $M^{\K}=\K \otimes_{\F} M$,
then $A^{\K}$ is an associative algebra over $\K$, and $M^{\K}$ is a (left) $A^{\K}$-module.


\begin{definition} (\cite{D})
Let $A$ be an associative algebra  over a field $\F$.  An simple $A$-module $S$ is called absolutely simple if $S^{\K}$ is an simple $A^{\K}$-module for any field extension $\K$ of $\F$.
\end{definition}

\begin{definition} (\cite{B})
A noetherian algebra over a field $\F$ is called stably noetherian if $A^{\K}$ is also noetherian for any field extension $\K$ of $\F$.
\end{definition}

The following Lemma is well-known (see \cite{Lang}):

\begin{lemma} \label{ext-semi}
Let $A$ be a finite dimensional semi-simple $\F$-algebra.
Then for any finite separable field extension $\K/\F$, the $\K$-algebra $A^{\K}$ is also semi-simple.
\end{lemma}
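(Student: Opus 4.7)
The plan is to combine the Artin--Wedderburn structure theorem with the classical behavior of tensor products under separable field extensions. By Artin--Wedderburn, we may write $A \cong \prod_{i=1}^r M_{n_i}(D_i)$, where each $D_i$ is a finite-dimensional division algebra over $\F$. Since scalar extension $\K \otimes_{\F} -$ commutes with finite products and matrix algebras, $A^{\K} \cong \prod_{i=1}^r M_{n_i}(D_i^{\K})$, and it suffices to prove each $D_i^{\K}$ is semi-simple.

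Fix a division algebra $D$ among the $D_i$ and let $Z = Z(D)$ be its center, a finite field extension of $\F$. Since $\K/\F$ is finite and separable, the primitive element theorem gives $\K \cong \F[x]/(g(x))$ for some separable irreducible polynomial $g \in \F[x]$, so that
\[
Z \otimes_{\F} \K \;\cong\; Z[x]/(g(x)).
\]
Separability of $g$ over $\F$ forces its irreducible factors $g = g_1 \cdots g_m$ over $Z$ to be pairwise coprime (distinct irreducible factors cannot share a root in any algebraic closure), so the Chinese remainder theorem yields $Z \otimes_{\F} \K \cong \prod_{j=1}^m L_j$, where each $L_j = Z[x]/(g_j(x))$ is a finite field extension of $Z$.

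Finally, $D^{\K} \cong D \otimes_{Z} (Z \otimes_{\F} \K) \cong \prod_{j=1}^m (D \otimes_Z L_j)$. Since $D$ is central simple over $Z$ and central simplicity is preserved under arbitrary field extension of the center, each $D \otimes_Z L_j$ is simple; hence $D^{\K}$ is a finite product of simple algebras, i.e., semi-simple. The main potential obstacle is the permanence of central simplicity under base change, a standard result available in references such as Lang's \emph{Algebra}; the remaining ingredients (Artin--Wedderburn, the primitive element theorem, and CRT) are entirely routine.
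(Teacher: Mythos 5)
Your argument is correct. Note that the paper itself offers no proof of this lemma---it is stated as well-known with a citation to Lang---so there is nothing to compare against line by line; your write-up is a valid, self-contained derivation along the standard route. The reduction via Artin--Wedderburn to a single division algebra $D$ with center $Z$, the base-change identity $D\otimes_{\F}\K\cong D\otimes_{Z}(Z\otimes_{\F}\K)$, and the decomposition $Z\otimes_{\F}\K\cong\prod_j L_j$ via the primitive element theorem and the Chinese remainder theorem are all sound. The one step worth stating slightly more carefully is why $g$ has no repeated irreducible factors over $Z$: the clean justification is that separability of $g$ over $\F$ means $\gcd(g,g')=1$ in $\F[x]$, and this B\'ezout relation persists in $Z[x]$, so $g$ remains squarefree there; your parenthetical about roots in an algebraic closure amounts to the same thing. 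The final appeal to the permanence of central simplicity under extension of the center field is standard and correctly applied, so each $D\otimes_Z L_j$ is simple and $A^{\K}$ is a finite product of simple algebras, as required.
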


 Let $ \K/\F$ be a field extension, and $(V, Y(-,z), {\bf 1}, \omega)$ be a vertex operator algebra over  $\F$. Let $(M, Y_M(- ,z))$ be an admissible $V$-module. We write:
 $$V^{\K}=\K \otimes_{\F} V, ~~ {\bf 1}^{\K}=1 \otimes {\bf 1},~~ \omega^{\K}=1 \otimes \omega, $$
 $$M^{\K}=\K \otimes_{\F} M=\bigoplus_{n=0}^\infty \K\otimes_\F M(n)=\bigoplus_{n=0}^\infty M(n)^\K,$$
 then define $Y^\K: V^\K \rightarrow \End{V^\K}[[z,z^{-1}]]$ and $Y_{M^\K}: V^\K\rightarrow \End(M^\K)[[z,z^{-1}]]$ by
 \begin{equation}\label{3.1}
   Y^{\K}(\lambda \otimes a , z)(\mu \otimes b):=\lambda \mu\otimes Y(a, z)b,
 \end{equation}
 \begin{equation}\label{3.2}
  Y_{M^{\K}}(\lambda \otimes a , z)(\mu\otimes u):=\lambda\mu \otimes Y_M(a, z)u,
 \end{equation}
 for all $a,b\in V,$ $\lambda, \mu \in \K$, and $u\in M$. Then it is easy to check that $(V^{\K}, Y^{\K}(-,z), {\bf 1}^{\K}, \omega^{\K})$ is a vertex operator algebra over $\K$,
and $(M^{\K}, Y_{M^{\K}}(- ,z))$  is a $V^{\K}$-module.

\begin{proposition} \label{exstf}
	Let $V$ be a vertex operator algebra over a field $\F$, and let $\K/ \F$ be a field extension. Then we have:
	\begin{enumerate}[{(1)}]
	\item $V$ is $C_2$-cofinite if and only if $V^{\K}$ is $C_2$-cofinite.
	
	\item $V$ is strongly finitely generated if and only if $V^{\K}$ is strongly finitely generated.
	\end{enumerate}
\end{proposition}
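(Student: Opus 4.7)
The plan rests on one elementary identity, immediate from (3.1): for $a,b\in V$ and $n\in\Z$, one has $(1\otimes a)_n(1\otimes b)=1\otimes a_n b$, so the $\F$-linear embedding $V\hookrightarrow V^\K$, $v\mapsto 1\otimes v$, is compatible with all vertex operator products.

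For (1), I would first show $C_2(V^\K)=\K\otimes_\F C_2(V)$. The inclusion $\supseteq$ is the identity above applied to the generators $u_{-n-2}v$ of $C_2(V)$. For $\subseteq$, writing arbitrary $\tilde u=\sum_i\alpha_i\otimes u^i$ and $\tilde v=\sum_j\beta_j\otimes v^j$ and using bilinearity of $Y^\K$ gives $\tilde u_{-n-2}\tilde v=\sum_{i,j}\alpha_i\beta_j\otimes u^i_{-n-2}v^j\in\K\otimes_\F C_2(V)$. Since $\K$ is free (hence flat) over $\F$, tensoring the exact sequence $0\to C_2(V)\to V\to R(V)\to 0$ with $\K$ yields $R(V^\K)\cong\K\otimes_\F R(V)$, whence $\dim_\K R(V^\K)=\dim_\F R(V)$ and (1) follows.

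For (2), the forward direction is immediate: if a finite-dimensional $\F$-subspace $U\subseteq V$ strongly generates $V$, the identity above shows $\K\otimes_\F U$ strongly generates $V^\K$ over $\K$. For the converse, suppose $\tilde U\subseteq V^\K$ is a finite-dimensional $\K$-subspace strongly generating $V^\K$. Choose a $\K$-basis $\tilde u^1,\dots,\tilde u^k$ of $\tilde U$ and expand each as a finite sum $\tilde u^i=\sum_j\lambda_{ij}\otimes u^{ij}$ with $u^{ij}\in V$; let $U:=\mathrm{span}_\F\{u^{ij}\}\subseteq V$, a finite-dimensional $\F$-subspace. Applying the identity repeatedly, every strong-generation monomial $\tilde u^{(i_1)}_{-m_1}\cdots\tilde u^{(i_r)}_{-m_r}{\bf 1}^\K$ expands as a $\K$-linear combination of elements $1\otimes w$ with $w=u^{(j_1)}_{-m_1}\cdots u^{(j_r)}_{-m_r}{\bf 1}$ and each $u^{(j_s)}\in U$. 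Letting $W\subseteq V$ be the $\F$-span of all such products $w$, we obtain $V^\K=\K\otimes_\F W$, and the elementary fact that $\K\otimes_\F(V/W)=0$ forces $V/W=0$. Hence $U$ strongly generates $V$.

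The only subtle step is this descent in the converse of (2): one must correctly identify a finite-dimensional $\F$-subspace $U\subseteq V$ that captures all the ``$\F$-content'' of a given $\K$-generating set for $V^\K$, and then transfer the resulting $\K$-spanning statement back to an $\F$-spanning statement, using the faithful flatness of the field extension. Everything else reduces to straightforward bilinearity of the extended vertex operators $Y^\K$.
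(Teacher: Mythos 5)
Your proof is correct, and it is exactly the ``straightforward from the definition'' argument that the paper leaves to the reader: the identity $(1\otimes a)_n(1\otimes b)=1\otimes a_nb$ gives $C_2(V^\K)=\K\otimes_\F C_2(V)$ and hence $\dim_\K R(V^\K)=\dim_\F R(V)$ for (1), and the descent via faithful flatness of $\K/\F$ handles the converse of (2). No gaps.
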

\begin{proof}
	The proof is straightforward from the definition.
\end{proof}

We can similarly define the absolute irreducibility for irreducible admissible modules of vertex operator algebras over arbitrary field.

\begin{definition}
Let $V$ be a vertex operator algebra over a field $\F$.  An irreducible admissible $V$-module $M$ is called {\bf absolutely irreducible} if $M^{\K}$ is an irreducible admissible $V^{\K}$-module for any field extension $\K/\F$.
\end{definition}


\begin{lemma} \label{lm3.6}
Let $V$ be a vertex operator algebra over a field $\F$,
and $M$ be an irreducible admissible $V$-module.
Let $\K/\F$ be a field extension.
Then $M^{\K}$ is an irreducible admissible $V^{\K}$-module  if and only if $M(0)^{\K}$ is a simple $A(V)^{\K}$-module.
\end{lemma}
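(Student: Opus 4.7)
The plan is to reduce both directions to Zhu's correspondence for the VOA $V^\K$ after establishing two natural compatibility isomorphisms: the canonical $\K$-algebra isomorphism $A(V^\K) \cong A(V)^\K$, which follows because the Zhu algebra is a linear quotient $V/O(V)$ and the defining relations of $O(V)$ are linear in $V$, so the construction commutes with tensoring by $\K$; and the identification $M^\K(0) = M(0)^\K$ as $A(V^\K)$-modules, where in both cases the action is given by zero modes, which commute with scalar extension. Given these, the forward direction is immediate: applying the standard half of Zhu's theorem to the irreducible admissible $V^\K$-module $M^\K$ yields that $M^\K(0) = M(0)^\K$ is simple over $A(V^\K) \cong A(V)^\K$.

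For the backward direction, assume $M(0)^\K$ is simple over $A(V)^\K$. Since $M$ is irreducible admissible with $M(0) \neq 0$, the $V$-submodule of $M$ generated by $M(0)$ is a nonzero graded submodule, hence equals $M$; scalar extending, $M^\K$ is generated by $M(0)^\K$ as a $V^\K$-module. Consequently, for any nonzero graded $V^\K$-submodule $N \subset M^\K$, it suffices to show $N(0) \neq 0$: once this is known, $N(0)$ is a nonzero $A(V^\K)$-submodule of the simple module $M(0)^\K$, so $N(0) = M(0)^\K$, and generation then forces $N \supseteq V^\K \cdot M(0)^\K = M^\K$, completing the proof.

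The main obstacle is ruling out nonzero graded $V^\K$-submodules $N$ with $N(0) = 0$. My approach is a projection argument: fix an $\F$-basis $\{\lambda_i\}_{i \in I}$ of $\K$ and, using the decomposition $M^\K = M \otimes_\F \K$, consider the $\F$-linear projections $\pi_i : M^\K \to M$ sending $\sum_j m_j \otimes \lambda_j$ to $m_i$. Each $\pi_i$ preserves the grading and is $V$-linear with respect to the inclusion $V \hookrightarrow V^\K$, since $(1 \otimes v)_n (m \otimes \lambda) = (v_n m) \otimes \lambda$. If such an $N$ existed, each $\pi_i(N)$ would be a graded $V$-submodule of the irreducible admissible $V$-module $M$ with trivial degree-$0$ part; since $M$ has no nontrivial graded submodule and $M(0) \neq 0$, this forces $\pi_i(N) = 0$ for every $i$, which implies that every element of $N$ has all its coordinates zero and hence $N = 0$, contradicting $N \neq 0$.
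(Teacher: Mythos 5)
Your proof is correct, and its overall architecture matches the paper's: the forward implication is treated as standard, and the backward implication reduces to showing that a nonzero graded $V^{\K}$-submodule $N$ of $M^{\K}$ must meet $M(0)^{\K}$, using that $M^{\K}$ is generated by $M(0)^{\K}$ and that $M(0)^{\K}$ is simple. Where you genuinely differ is the mechanism for that last step. The paper picks a nonzero vector $k_1\otimes m_1+\cdots+k_p\otimes m_p$ in the bottom graded piece of $N$ with the $k_i$ linearly independent over $\F$, applies the degree-lowering modes $(1\otimes v)_{\wt v+i}$, $i\geq 0$, to deduce $v_{\wt v+i}m_1=0$ for all $v$, and then argues by an induction that the cyclic $V$-submodule $\langle m_1\rangle\subset M$ satisfies $\langle m_1\rangle\cap M(0)=0$, contradicting irreducibility of $M$. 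You instead fix an $\F$-basis of $\K$ and use the grading-preserving, $V$-linear coordinate projections $\pi_i:M^{\K}\to M$: each $\pi_i(N)$ is a graded $V$-submodule of $M$ with zero degree-$0$ part, hence zero, forcing $N=0$. Both arguments exploit the same underlying fact (an $\F$-basis of $\K$ separates the $M$-components of elements of $M^{\K}$), but your version has a cleaner endgame, avoiding both the choice of a bottom-level element and the induction on $\langle m_1\rangle\cap M(0)$. One small point worth making explicit is that $M(0)\neq 0$, so that a graded submodule of $M$ with trivial degree-$0$ part is automatically proper; this is implicit in the hypothesis that $M(0)^{\K}$ is a simple $A(V)^{\K}$-module.
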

\begin{proof}
Clearly, if $M^\K$ is irreducible then $M(0)^\K$ is a simple $A(V)^\K$-module.

Conversely,  suppose $M(0)^{\K}$ is a simple $A(V)^{\K}$-module, and
$N$ is an admissible submodule of $M^{\K}$ such that $0 \lneq N \lneq M^{\K}$.
Since $M(0)^{\K}$ is simple and $M^{\K}$ is generated by $M(0)^{\K}$, we have $N \cap M(0)^{\K}=0.$
Hence there exists some $s >0$ such that the bottom level $N(0)$ of $N$ is contained in $ M^{\K}(s)$.
Take a nonzero element $k_1 \otimes m_1+\cdots + k_p \otimes m_p \in N(0) \ ( k_i \in \K,\ 0 \neq  m_i \in M(s), \ p>0)$
such that $ k_1, \cdots , k_p $ are linearly independence over $\F$.
For any $v \in V$ and $i\geq 0$, by \eqref{3.2} we have:
\begin{align*}
0&=(1\otimes v)_{\wt(v)+i}(k_1 \otimes m_1+\cdots + k_p \otimes m_p )\\
&=k_1 \otimes (v_{\wt(v)+i}m_1)+ \cdots + k_p \otimes (v_{\wt (v)+i}m_p).
\end{align*}
Since $ k_1, \dots ,k_p $ are linearly independence over $\F$,
we have $v_{\wt(v)+i}m_1=0$ for any $v \in V$ and $i\geq 0$.
Let $\<m_1\>$ be the $V$-submodule of $M$ generated by $m_1$. Then it follows from an easy induction that $\<m_1\> \cap M(0)=0,$ and so $\<m_1\>$ is a proper submodule of $M$.
This contradicts the assumption that $M$ is irreducible. Hence $M^{\K}$ is irreducible, and the proof is complete.
\end{proof}

The following corollaries are immediate consequences of Lemma \ref{lm3.6}.
\begin{corollary} \label{ext-simple2}
Let $V$  be a simple vertex operator algebra over a field $\F$, with $V_0=\F{\bf 1}$, and let $\K / \F$ be a field extension. Then $V^{\K}$ is also a simple vertex operator algebra over a field $\K$.
\end{corollary}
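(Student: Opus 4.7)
The plan is to apply Lemma \ref{lm3.6} with $M = V$, regarded as an admissible module over itself. Since $V$ is a simple VOA, $V$ is irreducible as an admissible $V$-module, and its bottom level is $V(0) = V_0 = \F\mathbf{1}$, one-dimensional over $\F$. Extending scalars, one has $V(0)^{\K} = \K \otimes_{\F} \F\mathbf{1} = \K\mathbf{1}$, still one-dimensional over $\K$.

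A nonzero one-dimensional $\K$-vector space admits only two $\K$-subspaces, so it is automatically simple as a module over any $\K$-algebra acting on it; in particular $V(0)^{\K}$ is a simple $A(V)^{\K}$-module. Lemma \ref{lm3.6} then yields that $V^{\K}$ is an irreducible admissible $V^{\K}$-module, which is exactly the assertion that $V^{\K}$ is a simple vertex operator algebra over $\K$. The argument presents no real obstacle; the only point worth recording is the standard identification of simplicity of a $\Z_+$-graded VOA (no proper nonzero ideals) with irreducibility of $V$ as an admissible module over itself, which is immediate because every $V$-submodule of the graded space $V$ is $L(-1)$-stable (as $L(-1) = \omega_0$ is a mode of a vertex operator) and hence an ideal in the VOA sense.
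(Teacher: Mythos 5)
Your proof is correct and follows the same route the paper intends: the paper simply declares this corollary an immediate consequence of Lemma \ref{lm3.6}, applied to $M=V$ with $V(0)=V_0=\F\mathbf{1}$ one-dimensional, exactly as you do. Your added remark identifying VOA-simplicity with irreducibility of $V$ as a module over itself (via $L(-1)$-stability and skew-symmetry) is a worthwhile clarification of a step the paper leaves implicit.
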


\begin{corollary}
Let $V$ be a vertex operator algebra over a field $\F$.
If  $M$ is  an irreducible admissible $V$-module such that $M(0) \cong \F$,
then $M$ is absolutely irreducible.
\end{corollary}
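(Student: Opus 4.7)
The plan is to deduce this corollary directly from Lemma \ref{lm3.6} together with the elementary observation that any nonzero one-dimensional module over a ring is automatically simple.

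First, I would fix an arbitrary field extension $\K/\F$ and consider the $V^\K$-module $M^\K$. By Lemma \ref{lm3.6}, to conclude that $M^\K$ is an irreducible admissible $V^\K$-module, it suffices to show that the bottom level $M(0)^\K = \K \otimes_\F M(0)$ is a simple $A(V)^\K$-module.

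Next, I would unpack the hypothesis $M(0) \cong \F$. This identification is as an $A(V)$-module (and in particular as an $\F$-vector space), so $\dim_\F M(0) = 1$. Applying the scalar extension functor $\K \otimes_\F -$, which preserves dimension, one gets
\[
M(0)^\K = \K \otimes_\F M(0) \cong \K \otimes_\F \F \cong \K,
\]
so $M(0)^\K$ is a one-dimensional $\K$-vector space. Any nonzero proper $A(V)^\K$-submodule of $M(0)^\K$ would in particular be a proper $\K$-subspace of a one-dimensional $\K$-vector space, which is impossible. Hence $M(0)^\K$ is a simple $A(V)^\K$-module.

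Invoking Lemma \ref{lm3.6} now gives that $M^\K$ is an irreducible admissible $V^\K$-module. Since $\K/\F$ was an arbitrary field extension, $M$ is absolutely irreducible by definition. There is no real obstacle here: the entire content is that Lemma \ref{lm3.6} has already reduced absolute irreducibility to a question about $A(V)^\K$-modules, and the one-dimensionality hypothesis trivializes that question.
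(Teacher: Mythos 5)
Your argument is correct and is exactly the route the paper intends: the paper states this corollary as an immediate consequence of Lemma \ref{lm3.6}, and your observation that $M(0)^{\K}\cong\K$ is one-dimensional over $\K$, hence simple over $A(V)^{\K}$, is precisely the "immediate" step being left to the reader.
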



\section{Associative algebras $A_n(V)$ and Quillen's Theorem}

Our goal in this section is to give a characterization of $\End_V(M)$, using the $A_n(V)$-theory of vertex operator algebras (over an arbitrary field) in \cite{DLM,R}, as a preparation of the main theorem of the paper.	We will also recall Quillen's theorem \cite{Q} for finite type filtered algebras and prove some preparation Lemmas for the following sections.

We first review the $A_n(V)$-theory of vertex operator algebras over arbitrary fields.

Let $V$ be a vertex operator algebra over a field $\F$, and $n$ be a nonnegative integer.
For any homogeneous elements $u\in V$, $v \in V$, and $s,t \in \Z_+$ with $t \geq s$, define
$$u \circ_{n,t}^{s}v=\text{Res}_{z}Y(u,z)v\frac{(1+z)^{\wt u+n+s}}{z^{2n+2+t}},$$
and
$$u*_nv=\sum_{m=0}^{\infty}(-1)^m{m+n \choose n}\text{Res}_zY(u,z)v\frac{(1+z)^{\wt u+n}}{z^{n+m+1}}.$$

Extend $\circ_{n,t}^{s}$ and $*_n$ bilinearly onto $V$. Let $O_{n}(V)$ be the span of elements  $u\circ_{n,t}^{s}v$ and $L(-1)u+L(0)u$, for all $u,v \in V$, and $s,t \in \Z_+$ with $t \geq s$.
Define $A_{n}(V)$ to be the quotient space $V/O_{n}(V)$.
In the case $n=0$, $A_0(V)$ is just the usual Zhu's algebra $A(V)$ (over field $\F$)
studied in \cite{DR1,Z}.

\begin{remark}
In the case $\text{Char}(\F)=0$, it is proved in \cite{DLM} that $O_n(V)$ is spanned by
$u \circ_{n,0}^0v$ and  $L(-1)u+L(0)u$, for all $u,v \in V$.
But for an arbitrary field $\F$, it is necessary (cf.\cite{R}) to include the elements $u \circ_{n,t}^{s}v$ in the definition of $O_n(V)$.
The notion $u \circ_{n,t}^{s}v$ comes from \cite{DJ}.
\end{remark}

For $u \in V$, we write $[u]=u+O_n(V) \in A_n(V)$. For a homogeneous element $u \in V$, we write $o(u)$ for the operator $u_{\wt u-1}$ on any weak $V$-module $M$.


\begin{theorem}\label{Anv1}(\cite{DLM,R})
Let $V$ be a vertex operator algebra over a field $\F$, and let $M$ be an admissible $V$-module. We have:
\begin{enumerate}[{(1)}]
\item For any $n \geq 0$, $(A_n(V), *_n, {\bf 1}+O_n(V))$ is an associative algebra.
\item For any $n \geq s \geq 0$,  the identity map on $V$ induces an onto algebra
homomorphism from $A_n(V)$ to $A_s(V)$.
\item For any $n \geq s \geq 0$, $M(s)$ is an $A_n(V)$-module such that $[u]\in A_{n}(V)$ acts as $o(u)$ for homogeneous $u \in V$.
\item If $V$ is rational, then $A_n(V)$ is a semi-simple associative algebra for any $n \geq 0$.
\item If $V$ is simple, then $V$ is rational if and only if  $A_n(V)$ is a semi-simple associative algebra for any $n \geq 0$.
\end{enumerate}

\end{theorem}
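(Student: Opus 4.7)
The plan is to prove the five parts of Theorem~\ref{Anv1} in order, following the Zhu-algebra construction~\cite{Z} as extended to higher level $n$ in~\cite{DLM} and to arbitrary characteristic in~\cite{R}.

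For part~(1), the strategy is: first show that $O_n(V)$ is a two-sided ideal for $*_n$, then verify associativity of $*_n$ modulo $O_n(V)$, and finally check that $[\mathbf{1}]$ is a two-sided unit. Each step proceeds by expressing the relevant difference as an iterated residue, invoking the Jacobi identity to reassemble factors, and collecting terms as generators of $O_n(V)$. The reason the enlarged family $u \circ_{n,t}^s v$ (for all $t\geq s\geq 0$) appears, rather than only $u \circ_{n,0}^0 v$ as in characteristic zero, is that over a field of positive characteristic one cannot invert the binomial coefficients that would otherwise reduce the higher $(t,s)$ cases to the base case; the extra generators must be included so that the ideal property holds integrally, as flagged in Remark~4.1.

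For part~(2), I would show that the identity on $V$ sends $O_n(V)$ into $O_s(V)$ whenever $n\geq s$: each generator $u\circ_{n,t}^{s'} v$ rewrites, after expanding $(1+z)^{\wt u + n + s'}$ and reindexing, as a sum of elements of the form $u\circ_{s,t''}^{s''} v$, and the generator $L(-1)u+L(0)u$ is common to both ideals. A parallel residue manipulation gives $u *_n v \equiv u *_s v \pmod{O_s(V)}$. For part~(3), each generator of $O_n(V)$ is checked directly to be annihilated by $o(\cdot)$ on $M(s)$ for $n\geq s$: expanding $u\circ_{n,t}^{s'} v$ as a residue and using the admissibility condition $u_k M(t') \subseteq M(\wt u + t' - k - 1)$ shifts the image out of $M(s)$; for $L(-1)u+L(0)u$ the identity $o(L(-1)u) = -\wt(u)\,o(u) = -o(L(0)u)$ is immediate. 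The relation $o(u *_n v) = o(u)\,o(v)$ on $M(s)$ follows from the iterate form of the Jacobi identity together with the same degree bookkeeping.

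Parts~(4) and~(5) follow the Dong--Li--Mason pattern: the functor $M\mapsto M(0)\oplus\cdots\oplus M(n)$ carries admissible $V$-modules to $A_n(V)$-modules, so rationality forces semi-simplicity of $A_n(V)$ at every level~$n$. The converse in~(5) uses the induced-module construction sending simple $A_n(V)$-modules back to admissible $V$-modules, combined with the simplicity of $V$ to rule out non-trivial extensions. The main obstacle throughout is part~(1): the ideal and associativity computations in characteristic $p$ require careful tracking of binomial coefficients that can vanish or fail to be invertible, and the enlarged family $\circ_{n,t}^s$ must be introduced at exactly the points where the classical characteristic-zero proof would otherwise divide by $p$.
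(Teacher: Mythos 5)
The paper offers no proof of Theorem~\ref{Anv1}: it is quoted from \cite{DLM} (characteristic $0$) and \cite{R} (arbitrary $\F$), so there is no in-paper argument to compare yours against. Measured against those references, your outline follows the standard route: the ideal/associativity/unit computations for (1), the containment $O_n(V)\subseteq O_s(V)$ for (2), annihilation of the generators of $O_n(V)$ under $o(\cdot)$ for (3), and the passage between admissible $V$-modules and $A_n(V)$-modules for (4) and (5). Your explanation of why the enlarged family $u\circ_{n,t}^{s}v$ must be thrown into $O_n(V)$ in positive characteristic agrees with the remark the paper makes after defining $O_n(V)$.

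Three places are loose enough to matter if the sketch were written out. First, in (3), ``admissibility shifts the image out of $M(s)$'' cannot be applied to $o(u\circ_{n,t}^{s'}v)$ directly: $o(a)=a_{\wt a-1}$ preserves every graded piece $M(s)$, so nothing is shifted. The actual argument first rewrites $o(u\circ_{n,t}^{s'}v)w$ via the Jacobi identity as a sum of compositions $u_{i}v_{j}w$ and $v_{j}u_{i}w$ in which the rightmost mode sends $M(s)$ into $M(k)$ with $k<0$; only after that rearrangement does the degree bookkeeping kill each term. Second, in (4) your functor points the wrong way: restricting admissible $V$-modules to $A_n(V)$-modules cannot show that an \emph{arbitrary} $A_n(V)$-module $U$ is semisimple. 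One must first induce, i.e.\ form $\mathcal{M}_n(U)$ with $\mathcal{M}_n(U)(n)=U$ (Theorem~\ref{Anv2}), decompose it by rationality, and then use the separate fact --- which itself requires proof --- that the degree-$n$ piece of an irreducible admissible module is zero or a simple $A_n(V)$-module. You invoke induction only for the converse in (5), but it is already indispensable for (4). Third, the ``only if'' direction of (5) is the genuinely hard step, and ``rule out non-trivial extensions'' does not describe it: one must show that semisimplicity of every $A_n(V)$ forces an arbitrary admissible submodule to split, and this is exactly where the simplicity of $V$ is used in \cite{R} as a substitute for the finite-dimensionality of $A_n(V)$ that is available over $\C$ but not over a countable field. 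None of these is a wrong idea, but all three are the points where the cited proofs do their real work.
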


\begin{remark}
In \cite{DR1,R}, the field $\F$ is assumed to be uncountable and algebraically closed, then by using the infinite-dimensional Schur's Lemma, one can conclude that $A_{n}(V)$ is not only a semi-simple algebra over $\F$ but finite-dimensional as well.

However, since we do not assume that F is uncountable,
it is not obvious whether or not $A_n(V)$ is finite-dimensional in parts (4) and (5) of theorem \ref{Anv1}.
\end{remark}

\begin{theorem} \label{Anv2}(\cite{DLM,R})
Let $V$ be a vertex operator algebra over a field $\F$, and let $U$ be an $A_n(V)$-module.
Then there exists an admissible $V$-module $\mathcal{M}_n(U)$ such that
\begin{enumerate}[{(1)}]
\item $\mathcal{M}_n(U)$ is generated by $\mathcal{M}_n(U)(n)=U$, and it has the following universal property:
for any admissible $V$-module $M$ and any $A_n(V)$-morphism $\phi: U \rightarrow M(n)$,
there exists a unique $V$-morphism $\overline{\phi}: \mathcal{M}_n(U) \rightarrow M$ such that $\overline{\phi} |_U=\phi$.
\item $\mathcal{M}_n(U)$ has a unique maximal admissible $V$-submodule $\mathcal{J}_n(U)$, with the property that $\mathcal{J}_n(U) \cap U=0.$
Set $L_n(U)=\mathcal{M}_n(U)/\mathcal{J}_n(U)$. Then $L_n(U)$ is an admissible $V$-module generated by $L_n(U)(n)=U$.
\item If $U$ is a simple $A_n(V)$-module, then $L_n(U)$ is an irreducible $V$-module.
\item If $M=\bigoplus_{i=0}^\infty M(i)$ is an irreducible admissible $V$-module,
then $L_n(M(n))$ is irreducible, and  $L_n(M(n)) \cong M$.
\end{enumerate}
\end{theorem}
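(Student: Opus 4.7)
The plan is to build $\mathcal{M}_n(U)$ as the ``largest admissible $V$-module generated by $U$ in degree $n$'' using an induction-style construction, then cut down to get $L_n(U)$ and verify its properties.

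\textbf{Step 1 (Construction of $\mathcal{M}_n(U)$).} Following the standard Dong--Li--Mason recipe, I would first form the Lie algebra $\mathcal{L}(V) = (V\otimes \F[t,t^{-1}])/\mathrm{Im}(L(-1)\otimes 1 + 1\otimes d/dt)$ with bracket coming from the vertex operator $(n+1)$-products, and write $v(m) = v\otimes t^m + \text{rel.}$ I would then turn $U$ into a module over the non-negative half $\mathcal{L}(V)_{\geq 0}$ (suitably truncated) by declaring, for each homogeneous $v\in V$, that $v(m)$ acts on $U$ through the $A_n(V)$-action of $o(v) = [v]$ when $m = \wt v -1$, and acts as zero for operators that would lower the degree below $n$ (so that $U$ sits at the bottom, i.e.\ in the $n$-th graded piece). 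Induce to get $\widetilde{\mathcal{M}}_n(U) = \mathcal{U}(\mathcal{L}(V))\otimes_{\mathcal{L}(V)_{\geq 0}} U$, equipped with the grading extending $\deg(U)=n$. Finally, let $I$ be the submodule generated by all Jacobi-identity/locality relations needed to make $Y_M$ well-defined, and set $\mathcal{M}_n(U) = \widetilde{\mathcal{M}}_n(U)/I$. One verifies, using Theorem \ref{Anv1}(3) and the relations defining $O_n(V)$ (including the $u\circ_{n,t}^s v$ terms, essential in arbitrary characteristic), that the grading descends, that $\mathcal{M}_n(U)(n) = U$, and that $Y_M$ makes $\mathcal{M}_n(U)$ an admissible $V$-module.

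\textbf{Step 2 (Universal property and the maximal submodule).} Given any admissible $M$ and an $A_n(V)$-map $\phi: U\to M(n)$, the extension $\overline\phi$ must send $v^{(1)}_{m_1}\cdots v^{(k)}_{m_k}u$ to $v^{(1)}_{m_1}\cdots v^{(k)}_{m_k}\phi(u)$; this is well-defined because all defining relations of $\mathcal{M}_n(U)$ hold in the admissible $V$-module $M$, and unique because $U$ generates $\mathcal{M}_n(U)$. For part (2), since admissible submodules are graded, the sum $\mathcal{J}_n(U)$ of all admissible submodules $N\subseteq \mathcal{M}_n(U)$ with $N\cap U=0$ is itself graded and satisfies $\mathcal{J}_n(U)\cap U=0$; it is manifestly the unique maximal such submodule.

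\textbf{Step 3 (Irreducibility when $U$ is simple, and (4)).} If $U$ is simple over $A_n(V)$ and $N\subseteq L_n(U)$ is a nonzero graded submodule, then $N\cap U$ is an $A_n(V)$-submodule of $U$; by maximality of $\mathcal{J}_n(U)$, if $N\cap U=0$ then $N$ lifts to an admissible submodule of $\mathcal{M}_n(U)$ meeting $U$ trivially, hence $N=0$ in $L_n(U)$; so $N\cap U = U$, whence $N\supseteq U$ and $N = L_n(U)$ by generation. For (4), apply the universal property to the identity map $M(n)\to M(n)$ to get a surjective $V$-morphism $\overline\phi:\mathcal{M}_n(M(n))\to M$ (surjective because its image contains $M(n)$ and $M$ is irreducibly generated from $M(n)$). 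Its kernel meets $M(n)$ trivially, so lies in $\mathcal{J}_n(M(n))$, and the induced map $L_n(M(n))\to M$ is a nonzero morphism between irreducibles, hence an isomorphism.

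\textbf{Main obstacle.} The genuine difficulty is Step 1: constructing $\mathcal{M}_n(U)$ and checking that the Jacobi identity descends to the quotient while preserving the admissible grading. Over a field of positive characteristic one cannot use the characteristic-zero shortcut of \cite{DLM} where $O_n(V)$ is spanned only by $u\circ_{n,0}^0 v$; one must use the fuller family $u\circ_{n,t}^{s}v$ from \cite{R}, and the bookkeeping of binomial identities used in the compatibility checks has to be done integrally. Once this construction is in place, Steps 2--3 are essentially formal consequences of the universal property and the definition of $\mathcal{J}_n(U)$.
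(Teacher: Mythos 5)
The paper itself gives no proof of this theorem; it is quoted verbatim from \cite{DLM} and \cite{R}, so your attempt can only be measured against the construction in those references. Your Steps 2 and 3 are the standard formal arguments and are essentially correct: the maximal submodule is the (graded) sum of all admissible submodules meeting $U$ trivially, and parts (3)--(4) follow from that maximality together with the universal property exactly as you describe.

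The genuine problem is in Step 1, and it is not a matter of bookkeeping. You induce so that every degree-lowering operator kills $U$, ``so that $U$ sits at the bottom, i.e.\ in the $n$-th graded piece.'' That is the correct recipe only for $n=0$. For $n\geq 1$ the module $\mathcal{M}_n(U)$ must have (generally nonzero) components in degrees $0,1,\dots,n-1$: in \cite{DLM,R} one only annihilates the operators $v(m)$ of degree $\wt v-m-1<-n$ (those that would push $U$ below degree $0$), while the operators of degrees $-n,\dots,-1$ act freely in the induced module and populate the levels below $U$. With your truncated version, part (1) already fails: for an admissible $M$ and an $A_n(V)$-map $\phi\colon U\to M(n)$, the extension $\overline{\phi}$ would have to send $v_{\wt v}u$ (which you have declared to be $0$) to $v_{\wt v}\phi(u)\in M(n-1)$, which is nonzero in general. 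Part (4) fails as well, since $L_n(M(n))$ would then be concentrated in degrees $\geq n$ and could not be isomorphic to $M$; note that $\bigoplus_{i\geq n}M(i)$ is not even a $V$-submodule of $M$. This is precisely what the Remark following the theorem (``$L_n(U)(0)$ might be zero if $U$ is also a module over $A_{n-1}(V)$'') is signalling, and it is the reason $O_n(V)$ must encode the module structure of all the levels $M(0),\dots,M(n)$ simultaneously (Theorem \ref{Anv1}(3)). Your identification of the hard technical point --- showing that $U$ survives into the quotient as the full $n$-th graded piece, which over an arbitrary field requires the full family $u\circ_{n,t}^{s}v$ rather than just $u\circ_{n,0}^{0}v$ --- is accurate, but the construction has to be repaired as above before that verification even makes sense.
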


\begin{remark}
We note that $L_n(U)(0)$ might be zero if $U$ is also a module over $A_{n-1}(V)$.
\end{remark}
Similar to Proposition \ref{exstf}, $A_n(V)$ satisfies the following properties:
\begin{proposition} \label{Anvex}
Let $V$ be a vertex operator algebra over a field $\F$, and let $\K /\F$ be a field extension.
Then for any $n \in \Z_+$, we have $A_n(V^{\K}) \cong (A_n(V))^{\K}.$
\end{proposition}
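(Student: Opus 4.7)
The plan is to identify the defining subspace $O_n(V^{\K})$ with the scalar extension $O_n(V)^{\K}$ viewed inside $V^{\K}$, and then invoke the exactness (flatness) of scalar extension by a field to pass the quotient through the tensor product. Concretely, I will first show as $\K$-subspaces of $V^{\K}$ the equality
\[
O_n(V^{\K}) \;=\; \K\otimes_{\F} O_n(V).
\]
The key observation is the formula \eqref{3.1}, which immediately gives $(\lambda\otimes a)_m(\mu\otimes b)=\lambda\mu\otimes a_m b$ for every $m\in\Z$, and likewise $L^{\K}(-1)(\mu\otimes u)+L^{\K}(0)(\mu\otimes u)=\mu\otimes(L(-1)u+L(0)u)$. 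Hence for homogeneous $a,b\in V$, $\lambda,\mu\in\K$, and $s\le t$ in $\Z_+$,
\[
(\lambda\otimes a)\circ_{n,t}^{s}(\mu\otimes b) \;=\; \lambda\mu\otimes (a\circ_{n,t}^{s} b),
\]
which shows the inclusion $\K\otimes_{\F}O_n(V)\subseteq O_n(V^{\K})$. For the reverse inclusion, a general generator of $O_n(V^{\K})$ has the form $x\circ_{n,t}^{s}y$ with $x,y\in V^{\K}$; writing $x=\sum_i\lambda_i\otimes a_i$ and $y=\sum_j\mu_j\otimes b_j$ and using $\F$-bilinearity of $\circ_{n,t}^{s}$ (which extends to $\K$-bilinearity on $V^{\K}$ via \eqref{3.1}) reduces it to a $\K$-linear combination of simple tensors of elements of $O_n(V)$; the $L(-1)+L(0)$ generators are handled the same way.

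Second, I apply flatness: since $\K$ is a free (hence flat) $\F$-module, the short exact sequence $0\to O_n(V)\to V\to A_n(V)\to 0$ remains exact after $\K\otimes_{\F}-$, giving a canonical $\K$-linear isomorphism
\[
(A_n(V))^{\K}\;=\;\K\otimes_{\F}\bigl(V/O_n(V)\bigr)\;\cong\; V^{\K}/\bigl(\K\otimes_{\F}O_n(V)\bigr)\;=\;V^{\K}/O_n(V^{\K})\;=\;A_n(V^{\K}).
\]
Finally, I check that this $\K$-linear isomorphism is in fact an algebra isomorphism. Again using \eqref{3.1}, the product $*_n$ on $V^{\K}$ satisfies $(\lambda\otimes u)*_n(\mu\otimes v)=\lambda\mu\otimes(u*_n v)$, so the induced multiplication on $V^{\K}/O_n(V^{\K})$ matches the product on $\K\otimes_{\F}A_n(V)$ defined by $(\lambda\otimes[u])*_n(\mu\otimes[v])=\lambda\mu\otimes[u*_n v]$. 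The unit $\1^{\K}+O_n(V^{\K})$ is sent to $1\otimes(\1+O_n(V))$, so the map is unital as well.

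The only mildly delicate point, and the place where one must be careful, is the equality $O_n(V^{\K})=\K\otimes_{\F}O_n(V)$. The subtlety is that generators of $O_n(V^{\K})$ are built from arbitrary elements of $V^{\K}$, which are $\K$-linear combinations rather than simple tensors; however, since $\circ_{n,t}^{s}$ is $\F$-bilinear on $V$ and the extension \eqref{3.1} makes it $\K$-bilinear on $V^{\K}$, each such generator expands into a $\K$-combination of simple-tensor generators. Apart from this bookkeeping, the proof is indeed straightforward as indicated in the paper.
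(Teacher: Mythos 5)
Your proof is correct, and it takes the only natural route: the paper itself offers no argument for this proposition (it is stated as ``similar to Proposition~\ref{exstf},'' whose proof is declared straightforward from the definitions), and your write-up correctly supplies the omitted details. The identification $O_n(V^{\K})=\K\otimes_{\F}O_n(V)$ via the formula $(\lambda\otimes a)_m(\mu\otimes b)=\lambda\mu\otimes a_mb$, followed by exactness of $\K\otimes_{\F}-$ and the compatibility of $*_n$ with simple tensors, is exactly what the authors intend the reader to check.
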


\begin{corollary} \label{Anv3}
Let $V$ be a vertex operator algebra over a field $\F$. If $M=\bigoplus_{i \geq 0}M(i)$ is an irreducible admissible $V$-module,
then $\text{End}_V(M) \cong \text{End}_{A_n(V)}(M(n))$ for any $n \in \Z_+.$
\end{corollary}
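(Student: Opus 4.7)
The plan is to define $\Phi: \text{End}_V(M) \to \text{End}_{A_n(V)}(M(n))$ by restriction, $f \mapsto f|_{M(n)}$, and to prove this is an algebra isomorphism. The two main tools are Theorem \ref{Anv1}(3), which equips $M(n)$ with its $A_n(V)$-module structure via $[u] \cdot x = o(u) x$, and Theorem \ref{Anv2}(1), (2), (4), which supply the universal property of $\mathcal{M}_n(M(n))$, the intersection property $\mathcal{J}_n(M(n)) \cap M(n) = 0$, and the identification $L_n(M(n)) \cong M$. After re-indexing I assume $M(0) \neq 0$; Theorem \ref{Anv2}(4) then forces $M(n) \neq 0$ for every $n \in \Z_+$, because otherwise $L_n(M(n)) = 0 \neq M$.

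For well-definedness I first show every $f \in \text{End}_V(M)$ is grading-preserving. Write $f = \sum_{k \in \Z} f_k$, where $f_k$ is the component sending $M(i)$ into $M(i+k)$; each $f_k$ is itself a $V$-endomorphism, since matching the degree-$(i+s-m-1+k)$ part of $f(u_m x) = u_m f(x)$ (for $u \in V_s$, $x \in M(i)$) gives $f_k(u_m x) = u_m f_k(x)$. If $k < 0$, then $\ker f_k \supseteq \bigoplus_{0 \leq i < -k} M(i) \neq 0$, so irreducibility forces $\ker f_k = M$; if $k > 0$, then $\mathrm{im}\, f_k \subseteq \bigoplus_{j \geq k} M(j)$ is a $V$-submodule missing $M(0) \neq 0$, hence a proper submodule, so $\mathrm{im}\, f_k = 0$. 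Thus $f = f_0$ preserves the grading, $f|_{M(n)}$ commutes with each $o(u)$ because $f$ commutes with every mode $u_m$, and $\Phi$ is a well-defined algebra homomorphism. Injectivity is then immediate: if $f|_{M(n)} = 0$, then $\ker f$ contains $M(n)$, which generates $M \cong L_n(M(n))$ by Theorem \ref{Anv2}(2), (4), forcing $f = 0$.

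For surjectivity, given $g \in \text{End}_{A_n(V)}(M(n))$, regard it as an $A_n(V)$-morphism $M(n) \to M(n) \subseteq M$. The universal property (Theorem \ref{Anv2}(1)) yields a unique $V$-morphism $\bar g: \mathcal{M}_n(M(n)) \to M$ with $\bar g|_{M(n)} = g$, and $\bar g$ is grading-preserving because $\mathcal{M}_n(M(n))$ is generated from $M(n)$ in degree $n$ by $V$-modes that shift the grading identically on source and target. Consequently $\bar g(\mathcal{J}_n(M(n)))$ is a graded $V$-submodule of $M$ whose degree-$n$ component vanishes (since $\mathcal{J}_n(M(n)) \cap M(n) = 0$), and irreducibility of $M$ together with $M(n) \neq 0$ forces $\bar g(\mathcal{J}_n(M(n))) = 0$; descending yields $f: L_n(M(n)) \cong M \to M$ with $\Phi(f) = g$. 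The main obstacle is this descent step, which couples the universal property, the intersection property, and irreducibility; the grading-preservation of $f$ needed in well-definedness is the analogous irreducibility argument on the opposite side of the isomorphism.
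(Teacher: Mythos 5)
Your proposal is correct and follows essentially the same route as the paper's proof: the restriction map $F\mapsto F|_{M(n)}$, injectivity from irreducibility, and surjectivity via the universal property of $\mathcal{M}_n(M(n))$ followed by descent through $\mathcal{J}_n(M(n))$ to $L_n(M(n))\cong M$. You additionally spell out two points the paper leaves implicit — that every $V$-endomorphism of $M$ preserves the grading (so restriction is well defined) and that $\bar g(\mathcal{J}_n(M(n)))=0$ — both of which are handled correctly.
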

\begin{proof}
Define a linear map $\varphi: \text{End}_V(M) \rightarrow \text{End}_{A_{n}(V)}(M(n)),$ $\varphi(F):=F|_{M(n)}$,
for any $F \in \text{End}_V(M)$. Since $M$ is irreducible, it is easy to see that $\varphi$ is injective.

Let $f\in \text{End}_{A_{n}(V)}(M(n))$, by Theorem \ref{Anv2} (1), there exists a $V$-homomorphism:
 $$\overline{f}:\mathcal{M}_n(M(n))\rightarrow M$$ such that $\overline{f}|_{M(n)}=f$.
Since $M$ is irreducible, we have $\text{Ker}\overline{f}=\mathcal{J}(M(n))$ by the definition of $\mathcal{J}(M(n))$.
Hence the $\overline{f}$ induces a  $V$-homomorphism
$\widetilde{f}: M \cong L_n(M(n)) \rightarrow M$ such that $\varphi(\widetilde{f})=\widetilde{f}|_{M(n)}=f$.
This shows that $\varphi$ is also surjective, and the proof is complete.
\end{proof}

Next, we recall Quillen's theorem for finite-type filtered algebras and prove some related results under field extensions.

\begin{definition}
An associative algebra $A$ over a field $\F$ is called filtered if
it is endowed with an increasing filtration:
$0=F_{-1}A \subset F_0A \subset F_1A \subset \cdots $ by $\F$-subspaces such that
\begin{enumerate}[{(1)}]
	\item $ 1_A \in F_0A,$
\item $ \ F_pA \cdot F_qA \subset F_{p+q}A$  for all $p,q \geq 0,$
\item $  A=\bigcup_{p \geq 0} F_pA.$
\end{enumerate}

Let $A$ be a filtered algebra. Set $\text{gr} A=\oplus_{n= 0}^\infty F_nA/F_{n-1}A=\bigoplus_{n=0}^\infty (\text{gr}A)_n $.
The multiplication in $A$ gives rise to a well-defined product:
$$(F_pA/F_{p-1}A) \times (F_qA/F_{q-1}A) \rightarrow F_{p+q}A/F_{p+q-1}A, \ \   (\bar{a},\bar{b})\mapsto \overline{a\cdot b},$$
for all $a,b\in V$, making $\text{gr}A$ a graded associative algebra.
\end{definition}

\begin{definition}
A filtered algebra $A$ over a field $\F$ is called finite-type if $\text{gr}A$ is a finitely generated commutative algebra.
\end{definition}

\begin{lemma} \label{quillen}  (Quillen's theorem)
Let $A$ be a finite-type filtered algebra over a field $\F$. Then $A$ has the endomorphism property. i.e., if $M$ is a simple $A$-module, then every element in $\text{End}_A(M)$ is algebraic over $\F$.
\end{lemma}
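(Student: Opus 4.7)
The strategy is Quillen's classical argument \cite{Q}, reasoning by contradiction. Assume toward contradiction that some $\phi \in \End_A(M)$ is transcendental over $\F$; the goal is to derive a contradiction from the finite-type filtered hypothesis on $A$.

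I would prepare four inputs. First, Schur's lemma applied to the simple $A$-module $M$ gives that $\End_A(M)$ is a division ring, so $\F[\phi]$ extends to a subfield $\F(\phi) \hookrightarrow \End_A(M)$ and $M$ becomes an $\F(\phi)$-vector space. Second, the commutative finitely generated $\F$-algebra $\gr A$ is a quotient of $\F[x_1,\ldots,x_n]$ and therefore has countable $\F$-dimension; lifting a basis through a set-theoretic section of the filtration $F_\bullet A$ then gives $\dim_\F A \le \aleph_0$. Third, since $M$ is simple it is $A$-cyclic, whence $\dim_\F M \le \aleph_0$. Fourth, by a standard partial fractions argument, the family $\{(\phi - \lambda)^{-1} : \lambda \in \F\}$ is an $\F$-linearly independent subset of $\F(\phi)$.

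When $\F$ is uncountable, the proof concludes immediately: applying the uncountable independent family in $\F(\phi)$ to any nonzero $v \in M$ produces an uncountable $\F$-linearly independent subset of $M$, contradicting $\dim_\F M \le \aleph_0$. For the general case I would reduce to the uncountable case by scalar extension. Pass to a purely transcendental extension $\K := \F(\{t_i\}_{i \in I})$ with $|I|$ uncountable. Then $A^\K$ is again finite-type filtered over $\K$, since its associated graded is $(\gr A)^\K$, still finitely generated commutative over $\K$, and $M^\K = \K \otimes_\F M$ is an $A^\K$-module. The lifted endomorphism $\phi \otimes 1 \in \End_{A^\K}(M^\K)$ remains transcendental over $\K$: any putative algebraic relation over $\K$, expanded in an $\F$-basis of $\K$, would force a nontrivial $\F$-polynomial relation for $\phi$ on $M$. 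After producing a simple $A^\K$-quotient $N$ of $M^\K$ to which $\phi \otimes 1$ descends, applying the uncountable case to the triple $(\K, A^\K, N)$ closes the argument.

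The main obstacle will be this last reduction step: producing a simple $A^\K$-quotient $N$ to which $\phi \otimes 1$ descends while remaining transcendental over $\K$. My approach would be to view $M^\K$ as a cyclic module over the enlarged subring $A^\K\langle \phi \otimes 1 \rangle \subset \End_\F(M^\K)$ generated by $A^\K$ and the central element $\phi \otimes 1$, extract a maximal submodule via Zorn's lemma (available since the module is cyclic), and then check that the resulting simple quotient is still $A^\K$-simple because $\phi \otimes 1$ is central, and that transcendence of $\phi \otimes 1$ over $\K$ survives passage to the quotient by the same tensor-basis argument used above.
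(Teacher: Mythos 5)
The paper itself does not prove this lemma; it is quoted from Quillen's paper \cite{Q}, whose actual argument runs through a generic flatness theorem for filtered rings with commutative finitely generated associated graded, not through cardinality counting. Your first four inputs and the uncountable case are fine (that part is the standard Dixmier--Amitsur argument: $\dim_\F M\le\aleph_0$ while a transcendental $\phi$ would force $\dim_\F M\ge\operatorname{card}\F$). The problem is the reduction from a general $\F$ to the uncountable case, which is where all the content of Quillen's theorem lives, and your version of it has a genuine gap.

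The gap is the claim that transcendence of $\phi\otimes 1$ survives passage to the simple quotient $N$ of $M^\K$. Your tensor-basis argument shows only that no nonzero $p\in\K[x]$ satisfies $p(\phi\otimes 1)=0$ \emph{on all of} $M^\K$; it does not prevent $p(\phi\otimes 1)M^\K$ from landing inside the maximal submodule $N'$, which is all that is needed for $\bar\phi$ to become algebraic on $N$. And this really happens: since $\phi$ is assumed transcendental, $M$ is an $\F(\phi)$-vector space, so as a $\K[\phi\otimes 1]$-module $M^\K$ is a direct sum of copies of $\K\otimes_\F\F(\phi)$, which is a proper localization of $\K[u]$ and not a field; hence for instance $(\phi\otimes 1-t_1)M^\K$ is already a proper $B$-submodule, and any maximal submodule containing it yields a simple quotient on which $\bar\phi$ acts as the scalar $t_1$. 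Worse, your own uncountable-case argument, applied to the triple $(\K,B,N)$ (note $B=A^\K\langle\phi\otimes 1\rangle$ is again finite-type filtered of countable $\K$-dimension), proves that $\bar\phi$ is \emph{always} algebraic over $\K$ on \emph{every} simple quotient $N$, so the contradiction you are aiming for can never be produced by any choice of $N'$; the strategy is not merely incomplete but unrepairable in this form. (A secondary, lesser issue: a simple $B$-module need not be simple over the subalgebra $A^\K$; centrality of $\phi\otimes 1$ does not give this, though one could work with $B$-simplicity throughout.) To prove the lemma over a countable field one genuinely needs Quillen's generic flatness argument (or an equivalent, e.g.\ the treatment in \cite{MR}): view $M$ as a module over $A\otimes_\F\F[\phi]$, find $f\in\F[\phi]$ with $M_f$ free over $\F[\phi]_f$, and contradict the divisibility of $M$ as an $\F[\phi]$-module.
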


The following Proposition, which is a stronger version of Quillen's theorem, might be well-known, but
we cannot find any reference to this version. So we present the proof for it in the rest of this section.

\begin{proposition}\label{finite}
Let $A$ be a finite-type filtered algebra over $\F$. Then $A$ has the strong endomorphism property. i.e.,
if $M$ is a simple $A$-module, then $\text{dim}_{\F}\text{End}_A(M) < \infty.$
\end{proposition}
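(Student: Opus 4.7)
By Lemma~\ref{quillen} and Schur's lemma, $D := \End_A(M)$ is a division $\F$-algebra in which every element is algebraic over $\F$. The plan is to transport the finite-type filtration on $A$ through $M$ to a filtration on $D$, and then to exploit the resulting graded structure on $\gr D$ to upgrade algebraicity to finite dimensionality.

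Since $M$ is simple, choose a nonzero $m\in M$, so $M=Am$, and equip $M$ with the good filtration $F_n M := F_n A\cdot m$; then $\gr M$ is a cyclic finitely generated graded $\gr A$-module with generator $[m]\in\gr_0 M$, so $\gr M \cong \gr A/J$ as graded $\gr A$-modules, where $J$ is the graded annihilator of $[m]$. Next I would define $D_n := \{\phi\in D : \phi(m)\in F_n M\}$; writing $\psi(m)=am$ with $a\in F_q A$ and using the $A$-linearity of $\phi$ gives $(\phi\psi)(m)=a\phi(m)\in F_{p+q} M$, so $D_p D_q\subset D_{p+q}$, and similarly $D_n\cdot F_q M\subset F_{n+q} M$. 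Consequently $D$ is a filtered $\F$-algebra with filtered action on $M$, and $\gr D$ acts on $\gr M$ commuting with $\gr A$. Because $\gr M\cong \gr A/J$ is cyclic over the commutative ring $\gr A$, one has $\End_{\gr A}(\gr A/J)=\gr A/J$, so evaluation at $[m]$ realizes $\gr D$ as a graded $\F$-subring of the commutative ring $\gr A/J$; in particular $\gr D$ is commutative.

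Algebraicity of $D$ now yields a strong structural property of $\gr D$: for $\phi\in D_n\setminus D_{n-1}$ with $n\geq 1$ and minimal polynomial $p(t)=t^k+c_{k-1}t^{k-1}+\cdots+c_0\in\F[t]$, the filtration estimates $\phi^k\in D_{nk}$ and $c_j\phi^j\in D_{nj}\subset D_{n(k-1)}$ for $j<k$, combined with $p(\phi)=0$, force $\phi^k\in D_{n(k-1)}$; taking symbols yields $\bar\phi^k=0$ in $\gr D$. Hence every nonzero homogeneous element of positive degree in $\gr D$ is nilpotent. Under the standard normalization $F_0 A=\F$, injectivity of $\phi\mapsto\phi(m)$ gives $D_0=\F$, so $\gr_0 D=\F$ and $\gr D$ is a local graded commutative $\F$-algebra whose maximal ideal $\gr_{>0} D$ coincides with its nilradical.

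The main obstacle is to conclude from here that $\dim_\F\gr D<\infty$. The algebra $\gr D$ is a graded commutative $\F$-subalgebra of the Noetherian $\F$-algebra $\gr A/J$, but subrings of Noetherian rings need not be Noetherian, and nilpotency of positive-degree elements alone does not bound the total dimension of $\gr D$. To close the argument I would aim to show that $\gr D$ is finitely generated as an $\F$-algebra, for instance via the Artin-Tate lemma applied to $\F\subset \gr D\subset \gr A/J$, after verifying that $\gr A/J$ is a finitely generated $\gr D$-module --- a finiteness that should be extracted from the division-ring structure of $D$ (beyond the commutativity of $\gr D$) combined with the cyclicity of $\gr M$. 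Once $\gr D$ is known to be a finitely generated commutative $\F$-algebra whose maximal ideal equals its nilradical, this nilradical becomes a genuinely nilpotent ideal, $\gr D$ is a local Artinian ring, hence finite dimensional over $\F$, and therefore $\dim_\F D=\dim_\F\gr D<\infty$.
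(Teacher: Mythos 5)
Your proposal is incomplete, and you say so yourself: everything up to the structure of $\gr D$ is plausible (the filtration $D_n=\{\phi: \phi(m)\in F_nM\}$ is multiplicative, $\gr D$ embeds via evaluation at $[m]$ into the commutative ring $\gr A/J$, and the minimal-polynomial estimate does show that positive-degree symbols are nilpotent), but none of this bounds $\dim_\F\gr D$. A graded commutative local $\F$-algebra with $\gr_0=\F$ in which every homogeneous element of positive degree is nilpotent can be infinite dimensional (e.g.\ $\F[x_1,x_2,\dots]/(x_i^2)$ with each $x_i$ in degree $1$), and being a subring of the Noetherian ring $\gr A/J$ does not help, as you note. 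Worse, the route you propose to close the gap cannot work: the Artin--Tate lemma applied to $\F\subset\gr D\subset\gr A/J$ requires $\gr A/J\cong\gr M$ to be a finitely generated $\gr D$-module, and this fails in the typical situation where $M$ is infinite dimensional and $D$ ends up finite dimensional (already for the Weyl algebra $A_1$ over $\C$, with $\gr A_1=\C[x,y]$ and an infinite-dimensional simple $M$, one has $D=\C$ while $\gr M$ is infinite dimensional). A minor additional point: you invoke the normalization $F_0A=\F$ to get $D_0=\F$, which is not part of the definition of a finite-type filtered algebra used here.

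The paper closes the argument by an entirely different mechanism, and the ingredient you are missing is Lemma~\ref{f-extension} (from McConnell--Robson): if $A^{\mathbb E}$ is left Noetherian and $M$ is a finitely generated $A$-module, then any subfield $\mathbb E$ of $\End_A(M)$ containing $\F$ is a \emph{finitely generated} field extension of $\F$. Taking $\mathbb E$ to be a maximal subfield of the division ring $D$ and $\K$ its center, Lemma~\ref{Noteherian2} gives that $A^{\mathbb E}$ is Noetherian, so $\mathbb E/\F$ is finitely generated; combined with Quillen's theorem (algebraicity) this makes $\mathbb E/\F$ finite, and Lemma~\ref{division-ring} then bounds $\dim_\K D$ and hence $\dim_\F D$. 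If you want to salvage your filtration approach, you would need a substitute for this Noetherian descent step that controls $\gr D$ directly; as written, the argument has a genuine gap at exactly the point where finiteness must be produced.
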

\begin{remark}
In the case when $\text{Char}(\F)=0$ and $A=U(\mathfrak{g})$ the universal enveloping algebra of a finite dimensional Lie algebra $\mathfrak{g}$ over $\F$,
Proposition \ref{finite} is proved in \cite[Proposition 2.6.9]{D}.
On the other hand, when $\F$ is of arbitrary characteristic and $A$ is a constructible algebra over $\F$, Proposition \ref{finite} is proved in \cite[Theorem 9.5.5]{MR}. Note that $U(\mathfrak{g})$ is a constructible algebra when $\mathfrak{g}$ is a finite-dimensional Lie algebra over $\F$. 

In our case, we cannot assume that $\text{gr}A$ is generated by $F_1A/F_0A$, since we will let $A$ be Zhu's algebra $A(V)$ later. So $A$ might not be a constructible algebra.
But we can still prove Proposition \ref{finite} using some techniques in \cite{MR}. 
\end{remark}

\begin{lemma} \label{division-ring}  (\cite{MR})
Let $D$ be a division ring whose center is $\K$, and let $\mathbb{E}$ be a maximal subfield of $D$.
If $\text{dim}_{\K}(\mathbb{E}) < \infty$, then $\text{dim}_{\mathbb{E}}(D) < \infty$ and $\text{dim}_{\K}(D) <\infty.$
\end{lemma}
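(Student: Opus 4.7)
My plan is to prove this via the Jacobson density theorem applied to a natural action of $R := D \otimes_\K \mathbb{E}$ on $D$. First, I would establish the centralizer identity $C_D(\mathbb{E}) = \mathbb{E}$: for any $x \in C_D(\mathbb{E})$, the subring $\mathbb{E}[x] \subset D$ is commutative, and since $D$ is a division ring, every nonzero element of $\mathbb{E}[x]$ has an inverse in $D$ that also commutes with $\mathbb{E}$ and with $x$ by a standard associativity argument. Thus the subfield of $D$ generated by $\mathbb{E}$ and $x$ is a commutative subfield containing $\mathbb{E}$, and the maximality of $\mathbb{E}$ forces equality, so $x \in \mathbb{E}$.

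Second, I would define the action of $R$ on $D$ by $(d \otimes e) \cdot x := d x e$, which is well-defined because $\K$ is central in $D$. Then $D$ is a simple left $R$-module, since any nonzero $R$-submodule is closed under left multiplication by $D$ and must therefore equal all of $D$. Using $C_D(\mathbb{E}) = \mathbb{E}$ from the first step, one checks that $\End_R(D) = \mathbb{E}$, acting by right multiplication. By the Jacobson density theorem, the image of $R$ in $\End_\mathbb{E}(D)$ is then dense, viewing $D$ as a right $\mathbb{E}$-vector space.

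Third, I would deduce the bound $\dim_\mathbb{E} D \le n$, where $n := [\mathbb{E}:\K]$, by a rank argument. Note that $R$ is a free left $D$-module of rank $n$, with basis $\{1 \otimes e_i\}_{i=1}^n$ for any $\K$-basis $\{e_i\}_{i=1}^n$ of $\mathbb{E}$. If there existed $n+1$ right-$\mathbb{E}$-linearly independent elements $x_0, \ldots, x_n \in D$, the density theorem would force the left-$D$-linear map
\[
\Phi : D^n \longrightarrow D^{n+1}, \qquad (a_1, \ldots, a_n) \longmapsto \Big( \sum_{i=1}^n a_i x_j e_i \Big)_{j=0}^{n}
\]
to be surjective. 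But no surjective left-$D$-linear map from $D^n$ onto $D^{n+1}$ can exist, because division rings satisfy the invariant basis number property. This contradiction yields $\dim_\mathbb{E} D \le n < \infty$, and the tower law $\dim_\K D = \dim_\mathbb{E} D \cdot \dim_\K \mathbb{E}$ then gives $\dim_\K D \le n^2 < \infty$.

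The main obstacle I anticipate is the centralizer identity in the first step: showing that a commutative subring of $D$ really generates a commutative subfield requires checking that inverses of nonzero elements remain in the centralizer of $\mathbb{E}$, a subtle use of the division-ring structure. Once that identity is secured, the simple-module verification, the density theorem, and the rank comparison all follow standard lines, with the invariant basis number property of division rings supplying the decisive finiteness input at the end.
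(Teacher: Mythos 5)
The paper offers no proof of this lemma---it is quoted directly from \cite{MR}---so there is no internal argument to compare against; your proof is complete and correct, and it is essentially the standard density-theorem argument for maximal subfields of division rings that the cited reference relies on. The two delicate points, the centralizer identity $C_D(\mathbb{E})=\mathbb{E}$ (where inverses of elements commuting with $\mathbb{E}$ again commute with $\mathbb{E}$, so maximality applies) and the impossibility of a surjective left $D$-linear map $D^{n}\to D^{n+1}$, are both handled correctly, and your argument even yields the sharper quantitative bounds $\dim_{\mathbb{E}}(D)\le [\mathbb{E}:\K]$ and $\dim_{\K}(D)\le [\mathbb{E}:\K]^{2}$.
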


The proof of the following Lemma is fairly standard (see \cite{MR}).
\begin{lemma} \label{Noteherian}
If $A$ is a filtered algebra such that $\text{gr}(A)$ is left noetherian, then $A$ is left noteherian.
\end{lemma}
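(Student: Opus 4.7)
The plan is to use the standard filtration lifting argument. Let $I \subseteq A$ be an arbitrary left ideal; I must show $I$ is finitely generated. First I would form the associated graded object
\[
\gr(I) = \bigoplus_{n\geq 0} \frac{I \cap F_n A}{I \cap F_{n-1}A},
\]
and verify that the natural inclusions $(I \cap F_n A)/(I\cap F_{n-1}A) \hookrightarrow F_n A / F_{n-1}A$ exhibit $\gr(I)$ as a (graded) left ideal of $\gr(A)$; this uses only that $F_p A \cdot F_q A \subseteq F_{p+q}A$ and that $I$ is a left ideal of $A$.

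Next, since $\gr(A)$ is left noetherian by hypothesis, the left ideal $\gr(I)$ is finitely generated. Because $\gr(I)$ is a graded left ideal, it is finitely generated by homogeneous elements $\bar a_1,\dots,\bar a_k$ of some degrees $n_1,\dots,n_k$. For each $i$ I would pick a lift $a_i \in I \cap F_{n_i}A$ representing $\bar a_i$ modulo $I \cap F_{n_i-1}A$.

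The core step is to prove by induction on $n\geq -1$ that every $b \in I \cap F_n A$ belongs to the left ideal $J := \sum_{i=1}^k A a_i \subseteq I$. The base case $n=-1$ is vacuous since $F_{-1}A = 0$. For the inductive step, given $b \in I \cap F_n A$, its image $\bar b \in \gr(I)_n$ can be written as $\bar b = \sum_i \bar c_i \, \bar a_i$ with each $\bar c_i \in (\gr A)_{n-n_i}$ (taking $\bar c_i = 0$ if $n < n_i$). Lifting each $\bar c_i$ to some $c_i \in F_{n-n_i}A$, the difference $b - \sum_i c_i a_i$ lies in $I \cap F_{n-1}A$, so by the inductive hypothesis it lies in $J$, whence $b \in J$. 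Since $A = \bigcup_n F_n A$, we conclude $I = J$, which is finitely generated.

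The main obstacle is really only bookkeeping: one must be careful that the lifts $\bar a_i$, $\bar c_i$ of homogeneous generators exist with the correct filtration degree (this is why homogeneity of the generating set of the graded ideal is needed), and that the induction hypothesis is applied at strictly smaller filtration level. There is no deeper obstruction, which is consistent with the author's remark that the proof is fairly standard.
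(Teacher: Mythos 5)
Your proof is correct and is exactly the standard leading-term/lifting argument that the paper itself omits, deferring to \cite{MR}; all the delicate points (injectivity of $\gr(I)\hookrightarrow\gr(A)$, reduction to homogeneous generators, and the induction on filtration degree) are handled properly. No changes needed.
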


The next Lemma shows that the finite-type property and the notherianess are preserved under field extensions.

\begin{lemma} \label{Noteherian2}
 Let $A$ be a finite-type filtered algebra over a field $\F$. Let $\K/\F$ be a field extension. Then
\begin{enumerate}[{(1)}]
 \item $A^{\K}$ is also a finite-type filtered algebra.
\item $A^{\K}$ is noetherian. In particular, $A$ is  noetherian.
 \end{enumerate}
\end{lemma}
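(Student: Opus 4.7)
The plan is to transport the filtration on $A$ to $A^{\K}$ via base change, and then verify that both the finite-type property and noetherianness pass through. Since $\K/\F$ is a free (hence flat) extension, I would set $F_n(A^{\K}) := \K \otimes_{\F} F_nA$. The filtration axioms (unit, multiplicativity, exhaustion) follow directly from the corresponding axioms on $A$ together with the bilinearity of the tensor product, so $A^{\K}$ becomes a filtered $\K$-algebra.

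The key step for (1) is identifying the associated graded algebra. Applying the exact functor $\K \otimes_{\F} (-)$ to the short exact sequence
\begin{equation*}
0 \to F_{n-1}A \to F_nA \to F_nA/F_{n-1}A \to 0
\end{equation*}
for each $n \geq 0$ and summing, I would obtain a natural isomorphism of graded $\K$-algebras
\begin{equation*}
\gr(A^{\K}) = \bigoplus_{n \geq 0} F_n(A^{\K})/F_{n-1}(A^{\K}) \cong \K \otimes_{\F} \gr A.
\end{equation*}
The right-hand side is commutative because $\gr A$ is, and is finitely generated over $\K$ because if $g_1,\dots,g_r$ generate $\gr A$ as an $\F$-algebra, then $1 \otimes g_1,\dots,1 \otimes g_r$ generate $\K \otimes_{\F}\gr A$ as a $\K$-algebra. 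This establishes (1).

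For (2), by (1) the algebra $\gr(A^{\K})$ is a finitely generated commutative $\K$-algebra, hence a quotient of a polynomial ring in finitely many variables, which is noetherian by Hilbert's basis theorem. Lemma \ref{Noteherian} then implies that $A^{\K}$ itself is left noetherian. The ``in particular'' clause follows by taking $\K = \F$, which gives $A^{\F} = A$.

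I do not anticipate a genuine obstacle: the argument is essentially bookkeeping in commutative algebra. The one technical point I would want to state cleanly is that the associated-graded construction commutes with flat base change, which in this free extension case is immediate from the exactness of $\K \otimes_{\F} (-)$.
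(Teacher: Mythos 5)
Your construction of the filtration on $A^{\K}$, the identification $\gr(A^{\K})\cong\K\otimes_{\F}\gr A$ via exactness of $\K\otimes_{\F}(-)$, and the deduction of left noetherianness from Hilbert's basis theorem together with Lemma \ref{Noteherian} all match the paper's argument. Part (1) is complete as you have written it.

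The gap is in part (2): you only establish that $A^{\K}$ is \emph{left} noetherian, whereas the lemma (and the paper's proof) asserts two-sided noetherianness --- the paper, following the convention of \cite{MR} and the notion of ``stably noetherian'' from \cite{B}, means noetherian on both sides, and it devotes the second half of its proof to the right-hand side. The missing step is handled by passing to the opposite algebra: $(F_nA^{\K})_{n\geq 0}$ is also a filtration of $(A^{\K})^{\mathrm{op}}$, and since $\gr(A^{\K})$ is \emph{commutative} one has $\gr\bigl((A^{\K})^{\mathrm{op}}\bigr)\cong\gr(A^{\K})$, which is again a finitely generated commutative algebra; Lemma \ref{Noteherian} then gives that $(A^{\K})^{\mathrm{op}}$ is left noetherian, i.e.\ $A^{\K}$ is right noetherian. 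Note that commutativity of the associated graded is exactly what makes this painless --- without it the opposite-algebra step would require a separate argument. Your reduction of the ``in particular'' clause to the case $\K=\F$ is fine.
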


\begin{proof}
Let  $0=F_{-1}A \subset F_0A \subset F_1A \subset \cdots $ be a filtration of $A$.
Then $$0=\K \otimes F_{-1}A \subset \K \otimes F_0A \subset \K \otimes F_1A \subset \cdots $$
is a filtration of $A^{\K}$, here $\otimes$ means $\otimes_{\F}$.
It is clear that $\text{gr}(A^{\K}) \cong \K \otimes_{\F} \text{gr}(A)$.
Hence $\text{gr}(A^{\K})$ is a finitely generated commutative algebra.
Consequently, $A^{\K}$ is a finite-type filtered algebra and $\text{gr}A^{\K}$ is left noetherian.
 By Lemma  \ref{Noteherian}, $A^{\K}$ is left noetherian.

 To prove $A^{\K}$ is right noetherian, we consider the opposite ring $(A^{\K})^{\text{op}}$ of $A^{\K}$.
Note that $A^{\K}$ is right noetherian if and only if $(A^{\K})^{\text{op}}$ is left noetherian.
It is easy to verify that $(F_nA^{\K})_{n=0}^{\infty}$ is also a filtration of $(A^{\K})^{\text{op}}$, where $F_nA^{\K}=K \otimes_{\F} F_n(A)$ for each $n\geq 0$.
Since $\text{gr}A^{\K}$ is commutative,
 $\text{gr}((A^{\K})^{\text{op}}) \cong \text{gr}A^{\K}$ is a finitely generated commutative algebra.
Therefore, $(A^{\K})^{\text{op}}$ is left noetherian, and so $A^{\K}$ is right noetherian. The proof is complete.
\end{proof}

The following lemma is the key to prove Proposition \ref{finite}.

\begin{lemma} \label{f-extension} (\cite{MR})
Let $A$ be a $\F$-algebra, $M$ be a finite generated $A$-module, and $\K$ be a subfield of $\text{End}_A(M)$ containing $\F$.
If $A^{\K}$ is  left noetherian, then $\K$ is a finitely generated field extension of $\F$.
\end{lemma}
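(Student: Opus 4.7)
The plan is to use noetherianness of $A^{\K}$ to descend to a finitely generated subfield $\K_0 \subseteq \K$ and then show $\K = \K_0$ by a direct tensor-product computation. Concretely, I would view $M$ as a left $A^{\K}$-module via the commuting actions of $A$ and $\K$; since $M$ is finitely generated over $A$, the same generators $m_1, \dots, m_n$ make it finitely generated over $A^{\K}$. The natural multiplication map $\phi : \K \otimes_{\F} M \twoheadrightarrow M$, $\lambda \otimes m \mapsto \lambda m$, is a surjection of $A^{\K}$-modules, where on $\K \otimes_{\F} M$ the algebra $A$ acts on the right factor and $\K$ on the left. Since $\K \otimes_{\F} M$ is finitely generated over the noetherian algebra $A^{\K}$ (by $\{1 \otimes m_i\}$), the kernel $N$ is finitely generated as well, say by $x_1, \dots, x_s$ with $x_i = \sum_j \lambda_{ij} \otimes m_{ij}$. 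Let $\K_0 = \F(\lambda_{ij})$, which is a finitely generated field extension of $\F$.

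The heart of the argument is to show $\K = \K_0$. Set $N_0 = \ker(\phi_0 : \K_0 \otimes_{\F} M \to M) = N \cap (\K_0 \otimes_{\F} M)$, so that $M_0 := (\K_0 \otimes_{\F} M)/N_0 \cong \K_0 \cdot M = M$ canonically via $\phi_0$. I would then prove the identification $N = \K \otimes_{\K_0} N_0$ as $A^{\K}$-submodules of $\K \otimes_{\F} M = \K \otimes_{\K_0} (\K_0 \otimes_{\F} M)$: flatness of $\K$ over $\K_0$ yields an injection $\K \otimes_{\K_0} N_0 \hookrightarrow \K \otimes_{\F} M$ whose image is an $A^{\K}$-stable submodule containing all generators $x_i$ of $N$, and so must equal $N$. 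Passing to quotients gives an $A^{\K}$-module isomorphism $\mu : \K \otimes_{\K_0} M \xrightarrow{\sim} M$ induced by $\lambda \otimes m \mapsto \lambda m$.

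To conclude, suppose toward contradiction that some $\lambda \in \K \setminus \K_0$ and some nonzero $m \in M$ exist. Choosing a $\K_0$-basis $\{e_\alpha\}$ of $\K$ with $e_0 = 1$ and expanding $\lambda = \sum_\alpha c_\alpha e_\alpha$ with $c_\alpha \in \K_0$ (some $c_\alpha$ with $\alpha \neq 0$ is nonzero because $\lambda \notin \K_0$), the element
$$ \lambda \otimes m - 1 \otimes \lambda m \;=\; \sum_{\alpha \neq 0} e_\alpha \otimes (c_\alpha m) + e_0 \otimes (c_0 m - \lambda m) $$
in $\K \otimes_{\K_0} M$ is nonzero, since any nonzero element of the field $\K \subseteq \End_A(M)$ acts injectively on $M$ and so $c_\alpha m \neq 0$ whenever $c_\alpha \neq 0$. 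Yet $\mu$ sends it to $\lambda m - \lambda m = 0$, contradicting injectivity and forcing $\K = \K_0$. The main obstacle is the identification $N = \K \otimes_{\K_0} N_0$ inside $\K \otimes_{\F} M$, which combines flatness of the field extension $\K/\K_0$ with the careful choice of $N_0$ to contain preassigned generators of $N$; once this is in place, the final contradiction is an explicit basis calculation in the tensor product.
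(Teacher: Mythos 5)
Your argument is correct. The paper offers no proof of this lemma---it is quoted directly from \cite{MR}---and your proof is essentially the one given there: descend the finitely generated kernel $N$ of the multiplication map $\K\otimes_{\F}M\twoheadrightarrow M$ to a finitely generated subfield $\K_0$, identify $N$ with $\K\otimes_{\K_0}N_0$ (the reverse inclusion $\K\cdot N_0\subseteq N$, which you leave implicit, is immediate because the multiplication map is $\K$-linear, so $N$ is a $\K$-subspace), and then exclude any $\lambda\in\K\setminus\K_0$ via the nonvanishing of $\lambda\otimes m-1\otimes\lambda m$ in $\K\otimes_{\K_0}M$.
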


We now return to the proof of Proposition \ref{finite}.

\begin{proof}[ Proof of Proposition \ref{finite}]Let $M$ be a simple $A$-module and
$D=\text{End}_A(M)$. Then $D$ is a division ring.  Let $\K$ be the center of $D$, and let $\mathbb{E}$ be a maximal subfield of $D$.

Note that $\F \leq \K \leq \mathbb{E}.$ By Lemma \ref{Noteherian2}, $A^\E$ is left noetherian.
Since $M$ is a simple $A$-module, it is finitely generated. Then by Lemma \ref{f-extension}, $\mathbb{E}$ is a finitely-generated field extension of $\F$.
Moreover, every element in $D$ is algebraic over $\F$ by Quillen's theorem, then $\mathbb{E}/\F$ is a finitely generated algebraic extension, hence $\E/\F$ is a finite extension.
In particular, $\text{dim}_{\K}(\mathbb{E}) < \infty$ and $\text{dim}_{\F}(\K) < \infty$.
By Lemma \ref{division-ring}, we have $\text{dim}_{\K}(D) < \infty$. Consequently, $\text{dim}_{\F}(D) < \infty$, and the proof is complete.
\end{proof}

\section{Strong endomorphism property of $A(V)$}
Using the results from previous sections, we can prove that Zhu's algebra $A(V)$ of a strongly finitely generated vertex operator algebra $V$ over an arbitrary field $\F$ satisfies the strong endomorphism property.

To proof this theorem, we first review the relations between the commutative algebras $R(V)$ and $\text{gr}A(V)$ studied in \cite{A,L1,L2,Liu,Z}.


Let $V$ be a vertex operator algebra over a field $\F$.
Recall that $R(V)=V/C_2(V)$ is a commutative Poisson algebra (cf. \cite{Z}), with the identity element $\bar{{\bf 1}}$, under the product and Lie bracket:
\begin{equation}\label{5.1}
\overline{u}\cdot \overline{v}=\overline{u_{-1}v} ,\qquad \{\bar{u},\bar{v}\}=\overline{u_0v},\quad \text{for}\ u,v\in V,
\end{equation}
where $\overline{u}$ represents the image of $u$ in $R(V)$.

For any $n \in \Z_+$, we define
$F_n(A(V))=A(V)_n=(V_0+ \cdots +V_n+O(V))/O(V) \subseteq A(V)$(cf. \cite{Z}).
It is easy to verify that  $(A(V)_n)_{n=0}^{\infty}$ is a filtration of $A(V)$.
Hence $A(V)$ is a filtrated algebra. Moreover, by the following property of $A(V)$:
$$u\ast v-v\ast u\equiv \sum_{i\geq 0}\binom{\wt u}{i} u_iv \pmod{O(V)},$$
 we can show that $\text{gr}A(V)$  is also a commutative Poisson algebra \cite{A, Liu}. Define a linear map $\phi : R(V) \rightarrow \text{gr}A(V)$ by letting:
$$\phi(\bar{u}):=u+A(V)_{n-1} \in A(V)_n/A(V)_{n-1},$$ for $n\in \Z_+$ and $u \in V_n$.
Then $\phi$ is a well-defined epimorphism between the commutative Poisson algebras $R(V)$ and $\text{gr}A(V)$ (see \cite{CJL,Liu}).

The following lemma is well-known (see \cite{A,L1,L2}). We include the proof of it for the completeness.

\begin{lemma} \label{RV}
Let $V$ be a strongly finitely generated vertex operator algebra over $\F$. Then:
\begin{enumerate}[{(1)}]
\item $R(V)$ is a finitely generated commutative algebra.
In particular, $R(V)$ is  noetherian.
\item $A(V)$ is a finite-type filtered algebra.
\end{enumerate}
\end{lemma}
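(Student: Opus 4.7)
The plan is to prove (1) by a direct generation argument from the strong finite generation hypothesis, and then to deduce (2) from the surjective Poisson homomorphism $\phi : R(V) \to \gr A(V)$ already recalled in the excerpt.

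For (1), I would fix a finite-dimensional subspace $U \subset V$ that strongly generates $V$, and take a basis $\{u^1, \ldots, u^k\}$ of $U$. By strong generation, $V$ is spanned by monomials $u^{i_1}_{-n_1} \cdots u^{i_r}_{-n_r}\mathbf{1}$ with all $n_j \geq 1$, so it suffices to show that the classes $\bar{u}^1, \ldots, \bar{u}^k$ generate $R(V)$ as a commutative algebra. The technical core is two auxiliary facts about $C_2(V)$: (a) for any $a, b \in V$ and $n \geq 2$ one has $a_{-n}b \in C_2(V)$, which follows by iterating the translation identity $(L(-1)a)_{-n} = n\, a_{-n-1}$ starting from the defining generators $a_{-2}b$; and (b) for any $a \in V$ and $n \geq 1$ one has $a_{-n}C_2(V) \subseteq C_2(V)$, which falls out of the Borcherds associativity/commutator formula. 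Granted (a) and (b), an induction on the length $r$ peeling the innermost factor $u^{i_r}_{-n_r}\mathbf{1}$ first shows that any monomial in which some $n_j \geq 2$ already lies in $C_2(V)$. The surviving monomials have all $n_j = 1$, and under the commutative product of \eqref{5.1} they project to $\bar{u}^{i_1} \cdots \bar{u}^{i_r}$. Hence $R(V) = \F[\bar{u}^1, \ldots, \bar{u}^k]$ is a finitely generated commutative $\F$-algebra, and noetherianness is then immediate from Hilbert's basis theorem.

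For (2), the filtration $(A(V)_n)_{n \geq 0}$ on $A(V)$ is already in place from the excerpt, and the excerpt supplies the epimorphism $\phi : R(V) \twoheadrightarrow \gr A(V)$ of commutative algebras. Composing this surjection with (1), the target $\gr A(V)$ inherits the property of being a finitely generated commutative $\F$-algebra, which is precisely the definition of $A(V)$ being a finite-type filtered algebra.

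The main obstacle is the combinatorial bookkeeping in (1): one must commute the higher-mode creation operators $u_{-n}$ past intermediate elements already known to lie in $C_2(V)$ using (b), and then organize the nested induction so that every monomial with some $n_j \geq 2$ gets absorbed into $C_2(V)$ without collateral damage. Once these reductions are executed, both halves of the lemma close quickly.
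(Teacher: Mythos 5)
Your proposal is correct and follows essentially the same route as the paper: use the commutator formula to show that the creation operators $a_{-n}$ ($n\geq 1$) preserve $C_2(V)$, conclude that every strong-generation monomial with some $n_j\geq 2$ lies in $C_2(V)$ so that the images of a finite strongly generating set generate $R(V)$ under the product \eqref{5.1}, and then transport finite generation to $\gr A(V)$ via the epimorphism $\phi$. The only cosmetic difference is that your fact (a) is immediate from the paper's definition of $C_2(V)$ (which already includes all modes $u_{-m}v$ with $m\geq 2$), so the $L(-1)$-iteration step is not needed.
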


\begin{proof}

(1) Let $U$ be a finite set that strongly generates $V$. Since
$$a_{-n}b_{-2}c=b_{-2}a_{-n}c+\sum_{j\geq 0}\binom{-n}{j}(a_jb)_{-n-2-j}c\equiv b_{-2}a_{-n}c\pmod{C_{2}(V)},$$
for any $a,b,c\in V$ and $n \geq 0$. It follows that for any spanning element $u^{1}_{-n_{1}}\dots u^{r}_{-n_{r}}{\bf 1}$ of $V$, where $u^i\in U$ and $n_{i}\geq 1$ for all $i$, if there exists some $i$ such that $n_{i}\geq 2$, then $u^{1}_{-n_{1}}\dots u^{r}_{-n_{r}}{\bf 1}\in C_{2}(V)$. Then by \eqref{5.1}, we have $R(V)$ is generated by $(U+C_{2}(V))/C_{2}(V)$, and so $R(V)$ is noetherian by Hilbert' basis theorem.

(2) Since $\phi$ is an epimorphism between the commutative algebras $R(V)$ and $\text{gr}A(V)$,
it follows that $\text{gr}A(V)$ as a commutative algebra is also finitely generated, and so $A(V)$ is a  finite-type filtered algebra.
\end{proof}

Now we prove the main theorem of this section:

\begin{theorem} \label{fi-end}
If $V$ is a strongly finitely generated vertex operator algebra over a field $\F$, and $\K /\F$ is a field extension, then
$A(V)^{\K}$ has the strong endomorphism property. In particular, $A(V)$ has the strong endomorphism property.
\end{theorem}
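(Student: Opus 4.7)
The plan is to combine the structural results that have already been established. By Lemma \ref{RV}(2), the hypothesis that $V$ is strongly finitely generated immediately gives that $A(V)$ is a finite-type filtered algebra over $\F$. So the first step is to transfer this property across the scalar extension $\K/\F$, and the second step is to invoke the general Proposition \ref{finite} for finite-type filtered algebras.

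For the transfer step, I would proceed in two ways that both work, and pick whichever reads more cleanly. The direct route is to note that tensoring the level filtration $(A(V)_n)_{n\geq 0}$ with $\K$ produces a filtration of $A(V)^{\K}$ with associated graded $\gr(A(V)^{\K}) \cong \K \otimes_{\F} \gr A(V)$; since $\gr A(V)$ is a finitely generated commutative $\F$-algebra, its base change is a finitely generated commutative $\K$-algebra, so $A(V)^{\K}$ is a finite-type filtered $\K$-algebra. This is precisely the content of Lemma \ref{Noteherian2}(1) applied to $A = A(V)$. Alternatively, I could first apply Proposition \ref{exstf}(2) to conclude that $V^{\K}$ is strongly finitely generated over $\K$, then apply Lemma \ref{RV}(2) to $V^{\K}$ to see that $A(V^{\K})$ is a finite-type filtered $\K$-algebra, and finally use Proposition \ref{Anvex} (in the case $n=0$) to identify $A(V^{\K}) \cong A(V)^{\K}$.

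With $A(V)^{\K}$ established as a finite-type filtered $\K$-algebra, Proposition \ref{finite} directly yields that $A(V)^{\K}$ has the strong endomorphism property over $\K$: for every simple $A(V)^{\K}$-module $S$, the endomorphism algebra $\End_{A(V)^{\K}}(S)$ is finite-dimensional over $\K$. The ``in particular'' clause follows by taking $\K = \F$.

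No serious obstacle arises, since all the hard work has been encapsulated in Proposition \ref{finite} (Quillen's theorem plus the Martindale--Robson finite-dimensionality argument) and Lemma \ref{RV} (the $R(V) \twoheadrightarrow \gr A(V)$ surjection). The only thing worth being careful about is invoking the correct statement for the base change: one must check that the level filtration on $A(V)$ is preserved and that its associated graded behaves well under $\otimes_{\F} \K$, which is precisely Lemma \ref{Noteherian2}(1). After that, the result is immediate.
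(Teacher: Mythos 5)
Your proposal is correct and its second route (Proposition \ref{exstf}(2), then Lemma \ref{RV}(2) applied to $V^{\K}$, then Proposition \ref{Anvex} to identify $A(V^{\K})\cong A(V)^{\K}$, followed by Proposition \ref{finite}) is exactly the paper's proof. Your first route, applying Lemma \ref{Noteherian2}(1) directly to $A=A(V)$, is an equally valid and slightly more economical variant, but the overall strategy is the same.
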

\begin{proof}
Because $V$ is  strongly finitely generated, $V^{\K}$ is also strongly finitely generated by Proposition \ref{exstf}.
This implies that $A(V)^{\K} \cong A(V^{\K})$ is a finite-type filtered algebra by Proposition \ref{Anvex} and Lemma \ref{RV}.
Now it follows from Proposition \ref{finite} that $A(V)^{\K}$ has the strong endomorphism property.
\end{proof}

\section{The endomorphism property of vertex operator algebras}
In this section, we will  prove the main results of the paper: Theorem \ref{main1} and Corollary \ref{Quillen-voa}.

\begin{lemma} \label{Anvapp1}
Let $V$ be a vertex operator algebra over a field $\F$. Then the following conditions are equivalent:
\begin{enumerate}[{(1)}]
\item  $A(V)$ has the endomorphism property.
\item  $V$ has the endomorphism property.
\item For any $n \in \Z_+$, $A_n(V)$ has the endomorphism property.
\end{enumerate}
\end{lemma}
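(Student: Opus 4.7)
The plan is to prove the cyclic chain $(3) \Rightarrow (1) \Rightarrow (2) \Rightarrow (3)$. The first implication is immediate because $A_0(V) = A(V)$, so the endomorphism property at every level $n$ specializes to $n=0$.

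For $(1) \Rightarrow (2)$, I would take an arbitrary irreducible admissible $V$-module $M$ and invoke Corollary \ref{Anv3} with $n=0$ to obtain the algebra isomorphism $\End_V(M) \cong \End_{A(V)}(M(0))$. The only step worth dwelling on is the verification that $M(0)$ is a simple $A(V)$-module: if $N \subsetneq M(0)$ were a nonzero $A(V)$-submodule, then the $V$-submodule $\langle N \rangle_V \subseteq M$ it generates would intersect $M(0)$ in exactly $N$, since the only modes contributing to the degree-zero component are the operators $o(v) = v_{\wt v - 1}$ for homogeneous $v \in V$, and these realize the $A(V)$-action on $M(0)$ via Theorem \ref{Anv1}(3). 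This would yield a proper nonzero $V$-submodule of $M$, contradicting irreducibility. Hence $M(0)$ is simple, and (1) transports the algebraicity of $\End_{A(V)}(M(0))$ across the isomorphism to $\End_V(M)$.

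For $(2) \Rightarrow (3)$, fix $n \in \Z_+$ and a simple $A_n(V)$-module $U$. By Theorem \ref{Anv2} parts (2) and (3), the module $L_n(U)$ is an irreducible admissible $V$-module with $L_n(U)(n) = U$. Applying Corollary \ref{Anv3} at level $n$ then yields $\End_V(L_n(U)) \cong \End_{A_n(V)}(U)$, and since the left-hand side consists of elements algebraic over $\F$ by hypothesis (2), so does the right-hand side, establishing (3).

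The only genuine verification is the simplicity of $M(0)$ as an $A(V)$-module in the step $(1) \Rightarrow (2)$; this is not a real obstacle but is the one place where the internal structure of admissible modules is used rather than just the formal correspondence between $A_n(V)$-modules and irreducible admissible $V$-modules. Beyond that, the argument is purely a bookkeeping application of the $L_n$ construction and Corollary \ref{Anv3}, and in particular the analogous equivalence for the strong endomorphism property would follow by the same template, replacing "algebraic over $\F$" with "finite-dimensional over $\F$" throughout.
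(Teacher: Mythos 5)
Your proposal is correct and follows essentially the same route as the paper: both arguments cycle through Corollary \ref{Anv3} for $(1)\Rightarrow(2)$ and through the $L_n(U)$ construction of Theorem \ref{Anv2} for $(2)\Rightarrow(3)$, and your added justification that $M(0)$ is a simple $A(V)$-module is a standard fact the paper simply asserts. The only (immaterial) divergence is in $(3)\Rightarrow(1)$, where you specialize to $n=0$ using $A_0(V)=A(V)$, while the paper takes $U$ a simple $A(V)$-module and views it as a simple $A_n(V)$-module via the surjection $A_n(V)\twoheadrightarrow A(V)$; both are valid one-line arguments.
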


\begin{proof}
(1) $\Rightarrow$ (2). Let $M$ be any irreducible admissible $V$-module over $\F$. Note that $M(0)$ is a simple $A(V)$-module.
By Corollary \ref{Anv3}, we have $\text{End}_V(M) \cong \text{End}_{A(V)}(M(0))$.
Since $A(V)$ has the  endomorphism property, every element in $\text{End}_V(M)$ is algebraic over $\F$. Thus, $V$ has the endomorphism property as well.

(2) $\Rightarrow$ (3).
Let $U$ be a simple $A_n(V)$-module. By (3) of Theorem \ref{Anv2}, $L_n(U)$ is an irreducible admissible $V$-module.
By Corollary \ref{Anv3} again, we have $\text{End}_{V}(L_n(U)) \cong \text{End}_{A_n(V)}(U)$.
Hence each element in $\text{End}_{A_n(V)}(U)$ is algebraic over $\F$.

(3) $\Rightarrow$ (1). Let $U$ be a simple $A(V)$-module.
By Theorem \ref{Anv1}, we know that $A(V)$ is a quotient algebra of $A_n(V)$,
hence $U$ is also a simple $A_n(V)$-module and $\text{End}_{A(V)}(U) \cong \text{End}_{A_n(V)}(U)$.
Hence each element in $\text{End}_{A(V)}(U)$ is algebraic over $\F$.
\end{proof}

By adopting a similar argument, we can prove the equivalency of the strong endomorphism properties:

\begin{lemma} \label{Anvapp2}
Let $V$ be a vertex operator algebra over a field $\F$. Then the following conditions are equivalent:
\begin{enumerate}[{(1)}]
\item  $A(V)$ has the strong endomorphism property.
\item $V$ has the strong endomorphism property.
\item For any $n \in \Z_+$, $A_n(V)$ has the strong endomorphism property.
\end{enumerate}
\end{lemma}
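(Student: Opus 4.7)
The plan is to mirror the cyclic three-step argument already carried out for Lemma \ref{Anvapp1}, replacing ``algebraic over $\F$'' by ``finite-dimensional over $\F$'' at every step. All the necessary machinery remains available: Corollary \ref{Anv3}, which for each irreducible admissible $V$-module $M$ gives an $\F$-algebra isomorphism $\text{End}_V(M) \cong \text{End}_{A_n(V)}(M(n))$; Theorem \ref{Anv2}(3) together with the equality $L_n(U)(n) = U$, which manufactures an irreducible admissible $V$-module out of any simple $A_n(V)$-module $U$; and Theorem \ref{Anv1}(2), which realizes $A(V) = A_0(V)$ as a quotient of $A_n(V)$ for every $n \geq 0$.

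For (1) $\Rightarrow$ (2), I will take an irreducible admissible $V$-module $M$ and apply Corollary \ref{Anv3} at level $n = 0$: since $M(0)$ is a simple $A(V)$-module, hypothesis (1) gives that $\text{End}_{A(V)}(M(0))$ is finite-dimensional, and this is transported across the isomorphism to $\text{End}_V(M)$. For (2) $\Rightarrow$ (3), given a simple $A_n(V)$-module $U$ I will form the irreducible admissible $V$-module $L_n(U)$ from Theorem \ref{Anv2}, then invoke Corollary \ref{Anv3} to identify $\text{End}_{A_n(V)}(U) = \text{End}_{A_n(V)}(L_n(U)(n))$ with $\text{End}_V(L_n(U))$; the latter is finite-dimensional by the strong endomorphism property of $V$. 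For (3) $\Rightarrow$ (1), the shortest route is to specialize $n = 0$ and use $A(V) = A_0(V)$; alternatively, starting from a simple $A(V)$-module $U$, pulling back along the quotient map $A_n(V) \twoheadrightarrow A(V)$ of Theorem \ref{Anv1}(2) turns $U$ into a simple $A_n(V)$-module with the same endomorphism algebra.

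I do not expect any substantive obstacle: the statement is a dimensional refinement of Lemma \ref{Anvapp1}, and the three isomorphisms above are manifestly isomorphisms of $\F$-algebras, so finite-dimensionality passes across them for free. The only point worth double-checking during the write-up is that the map $\varphi$ used in the proof of Corollary \ref{Anv3} is $\F$-linear on both sides, which is immediate from its definition as restriction to $M(n)$ combined with the universal property supplied by Theorem \ref{Anv2}(1).
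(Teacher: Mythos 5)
Your proposal is correct and matches the paper's approach exactly: the paper itself gives no separate proof of Lemma \ref{Anvapp2}, stating only that it follows ``by adopting a similar argument'' to Lemma \ref{Anvapp1}, which is precisely the substitution of ``finite-dimensional'' for ``algebraic'' across the same three implications via Corollary \ref{Anv3}, Theorem \ref{Anv2}(3), and Theorem \ref{Anv1}(2) that you describe. Your observation that (3) $\Rightarrow$ (1) can be obtained simply by specializing to $n=0$ is a minor simplification of the quotient-map argument used in the paper, but the content is the same.
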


Before we prove the main theorem, we give some examples of vertex operator algebras with the strong endomorphism property.
Note that the vertex operator algebras in the following example are all strongly finitely generated, but none of them are $C_2$-cofinite.

\begin{example}
Let $\F$ be an arbitrary field with $\text{Char}(\F)=0$.

(1) If $V=\bar{V}_{vir}(c,0)$ is the (universal) Virasoro vertex operator algebra over $\F$ with the central charge $c$.
It is well-known that  $A(V) \cong \F[x]$ (see \cite{W}).
Since $\F[x]$ is a principal ideal domain, every simple $\F[x]$-module is finite-dimensional, then it follows that $A(V)$ has the strong endomorphism property, and so $\bar{V}_{vir}(c,0)$ also has the strong endomorphism property by Lemma \ref{Anvapp2}.

(2) If $V=M_{\mathfrak{\hat{h}}}(1,0)$ is the  level one Heisenberg vertex operator algebra (\cite{FZ}) over $\F$
associated with the $n$-dimensional vector space $\mathfrak{h}$, then $A(V) \cong \F[x_1, \cdots, x_n]$.
Let $S$ be any simple $\F[x_1, \cdots, x_n]$-module. Then $S \cong \F[x_1, \cdots, x_n]/J$, for some maximal ideal of $\F[x_1, \cdots, x_n]$.
Clearly, $\F[x_1, \cdots, x_n]/J$ is a field extension of $\F$, and it is a finitely generated $\F$-algebra.
Hence $\F[x_1, \cdots, x_n]/J$ is a finte algebraic extension of $\F$ by Hilbert's Nullstellensatz.
Consequently, every simple $A(V)$-module is finite-dimensional.
Hence, $M_{\mathfrak{\hat{h}}}(1,0)$ has the strong endomorphism property  by Lemma \ref{Anvapp2}.

(3) Let $\mathfrak{g}$ be a finite-dimensional simple Lie algebra over a field $\F$,
equipped with a nondegenerate symmetric invariant bilinear form $\langle \cdot , \cdot \rangle$.
Assume that the Casimir operator $\Omega$ acts on $\mathfrak{g}$ as a scalar $2h \in \F$, with $k \neq -h$.
Let $V=V_{\mathfrak{\hat{g}}}(k,0)$ be the level $k $ vacuum module vertex operator algebra over $\F$
 associated with the finite-dimensional simple Lie algebra $\mathfrak{g}$ (\cite{FZ,LL}).
 Then $A(V) \cong U(\mathfrak{g})$, the universal enveloping algebra of $\mathfrak{g}$ (see \cite{FZ}).
 It is well-known that $U(\mathfrak{g})$ has the strong endomorphism property (\cite{D,MR}).
  Hence $V_{\mathfrak{\hat{g}}}(k,0)$ has the strong endomorphism property by Lemma \ref{Anvapp2}.

\end{example}

\begin{theorem} \label{main1} Let $V$ be a vertex operator algebra over a field $\F$.
	\begin{enumerate}[{(1)}]

\item  If $\F$ is an uncountable field, then $V$ has the endomorphism property.

\item  If $V$ is $C_2$-cofinite, then $V$ has the strong endomorphism property.

\item  If $\F=\R$ the real number field, then $V$ has the strong endomorphism property.

\item (Quillen's theorem) If $V$ is strongly finitely generated over $\F$, then $V$ has the strong endomorphism property.
\end{enumerate}
\end{theorem}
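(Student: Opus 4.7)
The plan is to prove the four parts by reducing each claim to a statement about Zhu's algebra $A(V)$ via the equivalences established in Lemma \ref{Anvapp1} and Lemma \ref{Anvapp2}, and then invoking the appropriate ring-theoretic result. Part (4) is essentially immediate: Theorem \ref{fi-end} says that $A(V)$ has the strong endomorphism property whenever $V$ is strongly finitely generated, and Lemma \ref{Anvapp2} transfers this property to $V$.

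For part (2), I would use $C_2$-cofiniteness to force $R(V) = V/C_2(V)$ to be finite-dimensional, and then exploit the epimorphism $\phi : R(V) \twoheadrightarrow \gr A(V)$ recalled before Lemma \ref{RV}: this makes $\gr A(V) = \bigoplus_{n \ge 0} A(V)_n / A(V)_{n-1}$ finite-dimensional, so only finitely many graded pieces are nonzero, the filtration $(A(V)_n)_{n\ge 0}$ stabilizes, and $A(V)$ itself is finite-dimensional. Every simple module over a finite-dimensional $\F$-algebra has finite-dimensional endomorphism ring (by ordinary Schur), and Lemma \ref{Anvapp2} concludes.

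For part (1), I would invoke Dixmier's ``infinite-dimensional Schur lemma'': over an uncountable field $\F$, any simple module $S$ over an $\F$-algebra $A$ with $\dim_\F S < |\F|$ has $\End_A(S)$ consisting only of elements algebraic over $\F$. Since each $V_n$ is finite-dimensional and the grading is indexed by $\Z_+$, $V$ has at most countable dimension over $\F$, and so does its quotient $A(V)$. Any simple $A(V)$-module $S$ is cyclic, hence $\dim_\F S \le \aleph_0 < |\F|$. Dixmier's lemma therefore applies, and Lemma \ref{Anvapp1} transfers the conclusion to $V$.

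For part (3), since $\R$ is uncountable, part (1) already gives that every element of the division ring $D := \End_V(M)$ is algebraic over $\R$. By the theorem of Frobenius in its algebraic-division-algebra form (due to Palais/Kaplansky), any real division algebra which is algebraic over $\R$ is isomorphic to $\R$, $\C$, or the quaternions $\mathbb{H}$; in particular $\dim_\R D \le 4 < \infty$. All four arguments are short given the preparatory results already in hand; I anticipate the only subtle points to be verifying the cardinality hypothesis in Dixmier's lemma in part (1), and correctly citing the algebraic version of Frobenius' classification (rather than the finite-dimensional form) in part (3).
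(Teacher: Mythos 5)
Your proposal is correct and follows essentially the same route as the paper: reduce to $A(V)$ via Lemmas \ref{Anvapp1} and \ref{Anvapp2}, use the countable-dimension/uncountable-field argument (the linear independence of the $(\theta-\lambda)^{-1}$) for (1), finite-dimensionality of $A(V)$ via the epimorphism $R(V)\twoheadrightarrow \gr A(V)$ for (2), the classification of algebraic real division algebras as $\R$, $\C$, or $\mathbb{H}$ for (3), and Theorem \ref{fi-end} for (4). The two subtle points you flagged are exactly the ones the paper handles, and your treatment of them is sound.
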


\begin{proof}
(1) By Lemma \ref{Anvapp1}, it is enough to prove that $A(V)$ has the  endomorphism property.
Let $S$ be a simple $A(V)$-module. Recall that $V$ is of countable dimension over $\F$. Then $A(V)$, as a quotient space of $V$, also has countable dimension. Thus $S$  has countable dimension over $\F$ as well.

 Let $\theta \in \text{End}_{A(V)}(S)$, we adopt a similar argument in \cite{Q} to show that $\theta$ is algebraic. Suppose $\theta$ is not algebraic, then $S$ is a nonzero vector space over $\F(\theta)$, and $\text{dim}_{\F}(S)\geq \text{dim}_{\F}(\F(\theta)) \geq \text{card}~\F$,
where the last inequality follows from the fact that
the elements of the form $(\theta-\lambda)^{-1} \ (\lambda \in \F)$ in $\F(\theta)$ are linearly independent.
This contradicts the assumption that $\F$ is uncountable. Hence $\theta$ must be algebraic.

(2) If $V$ is $C_2$-cofinite, then by the epimorphism $\phi : R(V)\rightarrow \gr(A(V))$ in Section 5 we know that $\gr(A(V))$, as well as $A(V)$, are finite-dimensional.
Naturally, $A(V)$ has the strong endomorphism property. Then by Lemma \ref{Anvapp2}, $V$ also has the strong endomorphism property.

(3) By Lemma \ref{Anvapp2} again, it is enough to prove that $A(V)$ has the strong endomorphism property.
Let $S$ be any simple $A(V)$-module, and let $D=\text{End}_{A(V)}(S)$.
Since $\R$ is an uncountable field, every element in $D$ is algebraic over $\R$ by (1).
Consequently, $D$ is a division algebra over $\R$ such that every element in $D$ is algebraic over $\R$.
Hence $D$ must be isomorphic to $\R, \C$ or $\mathbb{H}$, where $\mathbb{H}$ is the  quaternion algebra (see \cite{H}).
This implies that $\text{dim}_{\R}(D) < \infty$, as desired.

(4) Since $V$ is  strongly finitely generated, it follows from  Theorem \ref{fi-end} that $A(V)$ has the strong endomorphism property. So does $V$ by Lemma \ref{Anvapp2}.
\end{proof}

\begin{remark}
The $C_2$-cofinite CFT-type vertex operator algebras over the complex number field $\C$ are all strongly finitely generated (see \cite{L1}). So part (2) in Theorem \ref{main1} is a special case of part (4) when $\F=\C$ and $V$ is of the CFT-type. 
\end{remark}

The following corollary is an immediate consequence of Lemma \ref{Anvapp2} and Theorem \ref{main1}:

\begin{corollary} \label{main1cor}
(1) If $V$ is a vertex operator algebra over the real number field $\R$, then $A_n(V)$ has the strong endomorphism property.

(2) If $V$ is a strongly finitely generated vertex operator algebra over an arbitrary field $\F$, then $A_n(V)$ has the strong endomorphism property.

\end{corollary}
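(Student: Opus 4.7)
The plan is short because Corollary \ref{main1cor} is extracted mechanically from the equivalence $(2) \Leftrightarrow (3)$ in Lemma \ref{Anvapp2} together with the relevant cases of Theorem \ref{main1}. Recall that Lemma \ref{Anvapp2} already transfers the strong endomorphism property freely among $A(V)$, $V$, and each $A_n(V)$ for $n \in \Z_+$. So my strategy is simply: first establish the strong endomorphism property for $V$ itself in each of the two settings, and then use the implication $(2) \Rightarrow (3)$ of Lemma \ref{Anvapp2} to propagate it to all $A_n(V)$.

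For part (1), I would invoke Theorem \ref{main1}(3), which says that any vertex operator algebra over $\F = \R$ has the strong endomorphism property; the implication $(2) \Rightarrow (3)$ of Lemma \ref{Anvapp2} then yields the conclusion for every $A_n(V)$. For part (2), I would analogously invoke Theorem \ref{main1}(4), which gives the strong endomorphism property for any strongly finitely generated vertex operator algebra over an arbitrary field $\F$, and again appeal to Lemma \ref{Anvapp2} to pass from $V$ to $A_n(V)$ for all $n$.

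There is essentially no obstacle: the nontrivial work (the $A_n(V)$-theory underlying Lemma \ref{Anvapp2}, Frobenius' classification used for the real case, and the reduction to finite-type filtered algebras via $R(V) \twoheadrightarrow \gr A(V)$ used for the strongly finitely generated case) has already been carried out in the earlier sections, so no additional calculation is needed.
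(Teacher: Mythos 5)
Your proposal is correct and matches the paper exactly: the paper states Corollary \ref{main1cor} as an immediate consequence of Lemma \ref{Anvapp2} and Theorem \ref{main1}, which is precisely the route you take (Theorem \ref{main1}(3) resp.\ (4) to get the strong endomorphism property for $V$, then the implication $(2)\Rightarrow(3)$ of Lemma \ref{Anvapp2} to transfer it to every $A_n(V)$). No gaps.
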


\begin{remark}
If $V$ is a strongly finitely generated vertex operator algebra over a field $\F$,
by Lemma \ref{Noteherian2} and Lemma  \ref{RV}(2), we know that $A(V)$ is stably noetherian.
Moreover, by Corollary \ref{main1cor}, $A_n(V)$ has the strong endomorphism property for any $n \in \Z_+$.
A natural question is whether $A_{n}(V)$ is also stably noetherian.
\end{remark}

\begin{proposition} \label{rational-1}
Let $V$ be a vertex operator algebra over a field $\F$, with the strong endomorphism property. Then
\begin{enumerate}[{(1)}]
\item If $V$ is rational, then  $A_n(V)$ is a finite dimensional semi-simple associative algebra for each $n\in \Z_{+}$.
\item If $V$ is simple, then $V$ is rational if and only if  $A_n(V)$ is finite-dimensional semi-simple for each $n \in \Z_+$.
\end{enumerate}
\end{proposition}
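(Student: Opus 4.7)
The plan is to combine the semisimplicity of $A_n(V)$ provided by Theorem \ref{Anv1} with the Artin--Wedderburn structure theorem and the strong endomorphism property imported through Lemma \ref{Anvapp2}. Part (2) will drop out of part (1) together with Theorem \ref{Anv1}(5), so the substance lies in part (1).

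For (1), suppose $V$ is rational. I will first invoke Theorem \ref{Anv1}(4) to obtain that $A_n(V)$ is a semi-simple associative algebra. Artin--Wedderburn then yields a decomposition
$$A_n(V) \cong \prod_{i=1}^{k} M_{n_i}(D_i),$$
with $k$ finite, each $n_i$ a positive integer, and each $D_i$ a division ring over $\F$. The simple $A_n(V)$-modules, up to isomorphism, are the $S_i = D_i^{n_i}$, and $\End_{A_n(V)}(S_i) \cong D_i^{\text{op}}$. Since $V$ has the strong endomorphism property by hypothesis, Lemma \ref{Anvapp2} transfers this to $A_n(V)$, so each $D_i^{\text{op}}$, and hence each $D_i$, is finite-dimensional over $\F$. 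Because $\dim_\F M_{n_i}(D_i) = n_i^2 \dim_\F D_i$ and only finitely many factors appear, I conclude $\dim_\F A_n(V) < \infty$.

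For (2), the forward direction is immediate from (1), since nowhere in the argument above does one use simplicity of $V$ beyond rationality; thus rationality alone already delivers finite-dimensional semisimplicity of every $A_n(V)$. The backward direction is precisely Theorem \ref{Anv1}(5): if $V$ is simple and each $A_n(V)$ is semi-simple, then $V$ is rational, and the finite-dimensional hypothesis is not even needed on this side.

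The main obstacle I foresee is essentially cosmetic: confirming that ``semi-simple'' in Theorem \ref{Anv1}(4) is meant in the Artin--Wedderburn sense (the regular module $A_n(V)$ decomposes as a finite direct sum of simples), as opposed to a weaker notion such as being Jacobson-semisimple. If the cited sources already establish the strong form, the rest is standard. Otherwise, a short supplement using the functor $U \mapsto \mathcal{M}_n(U)$ from Theorem \ref{Anv2}, together with the semisimplicity of the admissible $V$-module category, would show that $A_n(V)$ is semisimple as a left module over itself, after which Artin--Wedderburn applies verbatim.
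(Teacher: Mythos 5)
Your proposal is correct and follows essentially the same route as the paper: Theorem \ref{Anv1}(4) gives semisimplicity, Artin--Wedderburn identifies the endomorphism division rings of the finitely many simple modules, and the strong endomorphism property (transferred from $V$ to $A_n(V)$ via Lemma \ref{Anvapp2}) forces each to be finite-dimensional, with part (2) then following from Theorem \ref{Anv1}(5). Your cautionary remark about the meaning of ``semi-simple'' is resolved exactly as you anticipate: the paper reads Theorem \ref{Anv1}(4) as saying $A_n(V)$ is semisimple as a left module over itself, hence artinian, so Artin--Wedderburn applies directly.
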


\begin{proof}
(1) If $V$ is rational, then by Theorem \ref{Anv1} (4), $A_n(V)$ is a semi-simple left $A_n(V)$-module for each $n\in \Z_+$.
Thus $A_n(V)$ is an aritinian algebra.
Let $\{ S_1, \cdots , S_k \}$ be the set of the isomorphism classes of simple $A_n(V)$-modules, then by Artin-Wedderburn theorem,
$$A_n(V)^{op} \cong \bigoplus_{i=1}^k \text{M}_{n_i}(D_i) \ \text{for some} \ n_i \in \Z_+ , \ \text{where} \ D_i=\text{End}_{A_n(V)}(S_i), \ \text{for\ all\ } i.$$
Since $V$ has the strong endomorphism property, we have $\text{dim}_{\F}(D_i) < \infty$ for all $i$.
Consequently,  $A_n(V)$ is a finite dimensional semi-simple associative algebra for each $n\in \Z_+$.

(2) follows immediately from (1) and Theorem \ref{Anv1} (5).
\end{proof}

\begin{proposition} \label{end-pro}
Let $\F$ be an algebraically closed field, and let $V$ be a vertex operator algebra over $\F$, with the  endomorphism property. Let $M=\bigoplus_{n \in \Z_+}M(n)$ be an irreducible admissible $V$-module. Then
\begin{enumerate}[{(1)}]
\item  $\text{End}_V(M) \cong \F$. Moreover, there exists a $\lambda \in \F$ such that $L(0)|_{M(n)}=\lambda +n$ for all $n$.


\item If $V$ is rational, then  $A_n(V)$ is a finite dimensional semi-simple associative algebra for every $n\in \Z_+$.
\end{enumerate}
\end{proposition}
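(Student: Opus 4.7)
The plan is to derive both parts from a division-algebra Schur argument over algebraically closed $\F$, combined with the hypothesis that every element of $\End_V(N)$ is algebraic over $\F$ for each irreducible admissible $V$-module $N$.

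For part (1), first note that $\End_V(M)$ is a division $\F$-algebra with $\F$ in its center, because $M$ is irreducible. By the endomorphism property, any $\theta\in\End_V(M)$ is algebraic over $\F$, so $\F[\theta]$ is a finite field extension of $\F$ sitting inside the division algebra; since $\F$ is algebraically closed, this forces $\F[\theta]=\F$, and hence $\End_V(M)\cong\F$. For the $L(0)$ statement, I would invoke the standard bracket $[L(0),u_m]=(\wt u-m-1)u_m$ for homogeneous $u\in V_s$, which implies that $L(0)$ preserves each graded piece $M(n)$, and then define $F:M\to M$ by $F|_{M(n)}:=L(0)|_{M(n)}-n\cdot\mathrm{Id}_{M(n)}$. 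Using the admissibility axiom $u_m M(t)\subseteq M(s+t-m-1)$ together with the bracket identity, a short computation yields $F(u_m w)=u_m F(w)$ on homogeneous pieces, so $F\in\End_V(M)$. Applying the first half of (1) gives $F=\lambda\cdot\mathrm{Id}_M$ for some $\lambda\in\F$, which is exactly $L(0)|_{M(n)}=\lambda+n$ for all $n$.

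For part (2), assume further that $V$ is rational. Theorem \ref{Anv1}(4) gives that $A_n(V)$ is a semisimple associative algebra, so by Artin--Wedderburn it decomposes as $A_n(V)\cong\bigoplus_{i=1}^{k}M_{m_i}(D_i)$, where the $D_i=\End_{A_n(V)}(S_i)$ range over endomorphism rings of a complete set $\{S_1,\dots,S_k\}$ of simple $A_n(V)$-modules. For each such $S_i$, Theorem \ref{Anv2}(3) produces an irreducible admissible $V$-module $L_n(S_i)$ with bottom level $L_n(S_i)(n)=S_i$, and Corollary \ref{Anv3} identifies $D_i\cong\End_V(L_n(S_i))$. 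Part (1) applied to $L_n(S_i)$ then yields $D_i\cong\F$ for every $i$, whence $A_n(V)\cong\bigoplus_{i=1}^{k}M_{m_i}(\F)$ is finite-dimensional and semisimple, as claimed.

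The main obstacle I anticipate is the $L(0)$ portion of part (1): $L(0)$ itself is not a $V$-endomorphism, so one must identify the correct degree-shifted operator $L(0)-n\cdot\mathrm{Id}$ on $M(n)$ and verify by direct bracket computation that this corrected operator commutes with every $u_m$. A secondary technical point in part (2) is the passage from Theorem \ref{Anv1}(4) to an explicit Artin--Wedderburn decomposition; if the cited semisimplicity is interpreted as $A_n(V)$ being semisimple as a left module over itself, this is immediate, otherwise one must supplement with an Artinianness argument derived from the rationality of the admissible module category.
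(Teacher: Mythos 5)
Your proof is correct and follows essentially the same route as the paper: part (1) is the standard Schur/degree-shift argument that the paper delegates to \cite[Proposition 4.5.5]{LL}, and part (2) simply inlines the Artin--Wedderburn argument of Proposition \ref{rational-1}, which the paper invokes after noting that (1) gives the strong endomorphism property. No gaps.
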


\begin{proof}
The proof of (1) is straightforward (see also \cite[Proposition 4.5.5]{LL}). It follows from (1) that $V$ has the strong endomorphism property.
Hence (2) is an immediate consequence of Proposition \ref{rational-1}.
\end{proof}


By the previous results, Theorem \ref{main1} (4), Proposition \ref{rational-1}, and Proposition \ref{end-pro}, we have the following: 

\begin{corollary} \label{Quillen-voa}
Let $V$ be a strongly finitely generated vertex operator algebra over a field $\F$, and let $M=\bigoplus_{n \in \Z_+}M(n)$ be an irreducible admissible $V$-module. Then we have:
\begin{enumerate}[{(1)}]
\item (Quillen's theorem) $\text{End}_V(M) < \infty$.

\item (Schur's lemma) If $\F$ is an algebraic closed field, then $\text{End}_V(M) \cong \F$.
Moreover, there exists a $\lambda \in \F$ such that $L(0)|_{M(n)}=\lambda +n$ for all $n$.


\item  If $V$ is rational, then  $A_n(V)$ is a finite-dimensional semi-simple associative algebra for any $n \geq 0$.
\end{enumerate}
\end{corollary}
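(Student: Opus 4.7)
The plan is to derive the corollary by directly assembling the three results cited in the statement: Theorem \ref{main1}(4) for part (1), Proposition \ref{end-pro} for part (2), and Proposition \ref{rational-1} for part (3). Since $V$ is strongly finitely generated, the hypothesis of Theorem \ref{main1}(4) is met, so $V$ has the strong endomorphism property; unpacking Definition \ref{df2.9}, this says exactly that $\dim_\F \End_V(M) < \infty$ for every irreducible admissible $V$-module $M$, which is the content of part (1) (the displayed ``$\End_V(M)<\infty$'' being shorthand for finite-dimensionality over $\F$).

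For part (2), I would observe that the strong endomorphism property trivially implies the weak endomorphism property of Definition \ref{df2.9}. Under the additional hypothesis that $\F$ is algebraically closed, the hypotheses of Proposition \ref{end-pro} are therefore satisfied, and its conclusion gives both $\End_V(M)\cong \F$ and the existence of $\lambda \in \F$ with $L(0)|_{M(n)}=\lambda+n$ for all $n$. This is exactly Schur's lemma in the form claimed, and no further computation is needed.

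For part (3), I would again invoke the strong endomorphism property from part (1), combined with the rationality hypothesis, in order to apply Proposition \ref{rational-1}(1); this immediately yields that $A_n(V)$ is a finite-dimensional semi-simple associative algebra for every $n\in\Z_+$. No fresh argument is required here either: the substantive work (the Quillen-style reduction through the finite-type filtered structure of $A(V)$ coming from the epimorphism $R(V)\to \gr A(V)$, the $L(0)$-eigenvalue analysis on the bottom level, and the Artin--Wedderburn packaging of $A_n(V)$ in the rational case) has been completed in the preceding sections. The only ``obstacle'' is therefore organizational: verifying that the hypotheses of all three cited results are simultaneously met under the single hypothesis that $V$ is strongly finitely generated, which is immediate from what has already been proven.
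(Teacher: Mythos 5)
Your proposal is correct and follows exactly the route the paper takes: the corollary is presented there as an immediate consequence of Theorem \ref{main1}(4), Proposition \ref{end-pro}, and Proposition \ref{rational-1}, which is precisely your assembly, including the observation that the strong endomorphism property implies the (weak) endomorphism property needed for Proposition \ref{end-pro}. Nothing further is required.
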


\begin{remark}
Let $V$ be a vertex operator algebra over an arbitrary field $\F$.
It is well-known that if $\F$ is uncountable and algebraically closed, then Schur's Lemma in Corollary \ref{Quillen-voa} (2) is satisfied.
In \cite{DR1}, the authors made the conjecture that the assumption $\F$ is uncountable in Schur's Lemma is unnecessary.

Now by Corollary \ref{Quillen-voa}, we can drop the uncountable assumption for $\F$ in Schur's Lemma for strongly finitely generated vertex operator algebra $V$.
\end{remark}


\section{Applications: Scalar Extensions}

In \cite{M}, the author studied the vertex operator algebras $V$ over $\R$, and proved that $V^\C=\C\otimes_\R V$ is rational if $V$ is rational over $\R$. As an application of Proposition \ref{rational-1}, we will prove that the inverse is also true.

\begin{theorem} \label{exrationa}
Let $\K/ \F$ be a finite separable field extension, and $V$ be a simple vertex operator algebra over $\F$ s.t. $V_0=\F {\bf 1}$.
Assume that $V$ and $V^{\K}$ both have the strong endomorphism property. Then $V$ is rational if and only if $V^{\K}$ is rational.
\end{theorem}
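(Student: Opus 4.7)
The plan is to reduce the equivalence of rationality of $V$ and $V^{\K}$ to the equivalence of finite-dimensional semi-simplicity for the algebras $A_n(V)$ and $A_n(V)^{\K}$, and then check this algebraic equivalence using that $\K/\F$ is finite separable. First I would observe that since $V$ is simple with $V_0 = \F{\bf 1}$, Corollary \ref{ext-simple2} ensures that $V^{\K}$ is a simple vertex operator algebra over $\K$. Both $V$ and $V^{\K}$ have the strong endomorphism property by hypothesis, so Proposition \ref{rational-1}(2) applies to each and yields: $V$ (resp.\ $V^{\K}$) is rational if and only if $A_n(V)$ (resp.\ $A_n(V^{\K})$) is finite-dimensional and semi-simple for every $n \in \Z_+$. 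Combining with the isomorphism $A_n(V^{\K}) \cong A_n(V)^{\K}$ from Proposition \ref{Anvex}, the theorem reduces to the purely associative-algebra claim: for the $\F$-algebra $A := A_n(V)$, $A$ is finite-dimensional and semi-simple if and only if $A^{\K}$ is.

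For the forward direction, Lemma \ref{ext-semi} immediately gives that $A^{\K}$ is semi-simple whenever $A$ is finite-dimensional semi-simple (and $\dim_{\K} A^{\K} = \dim_{\F} A$ takes care of finite-dimensionality). For the reverse direction, $\dim_{\F} A = \dim_{\K} A^{\K} < \infty$ shows $A$ is finite-dimensional, so the Jacobson radical $J(A)$ is nilpotent. Then $J(A) \otimes_{\F} \K$ is a nilpotent two-sided ideal in $A^{\K}$, hence contained in $J(A^{\K}) = 0$, forcing $J(A) = 0$; thus $A$ is semi-simple.

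The main obstacle --- indeed the new contribution beyond Montague's result in \cite{M} --- is the reverse implication ``$V^{\K}$ rational $\Rightarrow V$ rational'', which becomes tractable only because Proposition \ref{rational-1} now pins rationality down in terms of genuinely finite-dimensional Zhu algebras. Without the finite-dimensionality part of the strong endomorphism property established in this paper, the descent of semi-simplicity via the nilpotent Jacobson radical argument would not be available. Note that separability of $\K/\F$ is needed only in the forward direction, through Lemma \ref{ext-semi}; the reverse direction works for any field extension.
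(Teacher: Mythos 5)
Your proposal is correct and follows essentially the same route as the paper: reduce to the algebras $A_n(V)$ and $A_n(V)^{\K}$ via Proposition \ref{rational-1}, Proposition \ref{Anvex}, Corollary \ref{ext-simple2} and Theorem \ref{Anv1}(5), use Lemma \ref{ext-semi} for the ascent of semi-simplicity, and descend semi-simplicity by noting that a nonzero nilpotent ideal of $A_n(V)$ would extend to one in $A_n(V)^{\K}$ (your Jacobson-radical phrasing is just a repackaging of the paper's nilpotent-ideal argument). Your closing remark that separability is only needed in the forward direction is also consistent with the paper's proof.
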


\begin{proof}
The ``if" part: Since $V^{\K}$ is rational and has the strong endomorphism property, then by Proposition \ref{rational-1}, $A_n(V^{\K}) \cong \K \otimes A_n(V)$ is a finite-dimensional
semi-simple associative algebra for any $n \in \Z_+$.
Since $\text{dim}_{\F}(A_n(V))=\text{dim}_{\K}(\K \otimes A_n(V))$, we have $\text{dim}_{\F}(A_n(V)) < \infty$.
We claim that $A_n(V)$ is also semi-simple for each $n \in \Z_+$. Indeed, we just need to show that $A_n(V)$ has no nonzero nilpotent ideal since it is already finite-dimensional.
Suppose that $A_n(V)$ has a nonzero nilpotent ideal $I$, then $\K \otimes I$ is a nonzero nilpotent ideal of $A_n(V^{\K})$, which contradicts the fact that $A_n(V^{\K})$ is semi-simple. Hence each $A_n(V)$ is semi-simple, and so $V$ is rational by Theorem \ref{Anv1}(5).

The ``only if" part: If $V$ is rational, then by Proposition \ref{rational-1} again, $A_n(V)$ is a finite-dimensional semi-simple associative algebra for each $n \in \Z_+$.
Since $\K/\F$ is a finite separable extension, it follows from Lemma \ref{ext-semi} that $A_n(V^{\K}) \cong \K \otimes A_n(V)$ is also finite dimensional semi-simple over $\K$.
Moreover, $V^{\K}$ is a simple vertex operator algebra over $\K$ by Corollary \ref{ext-simple2}. Hence  $V^{\K}$ is rational by Theorem \ref{Anv1}(5).
\end{proof}

\begin{remark}
When $\K/ \F$ is a finite separable extension, we believe that $V$ has the strong endomorphism property if and only if $V^{\K}$ has the strong endomorphism property.
But we do not know how to prove it.
\end{remark}

\begin{corollary}  \label{Rrational}
Let $V$ be a simple vertex operator algebra over the real number field $\R$, with $V_0=\R{\bf 1}$.
Then $V$ is rational if and only if $V^{\C}$ is rational.
\end{corollary}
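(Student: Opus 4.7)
The plan is to deduce this as a direct specialization of Theorem \ref{exrationa}, with $\F = \R$ and $\K = \C$. The extension $\C/\R$ is finite of degree two and separable because we are in characteristic zero, and the hypothesis that $V$ is simple with $V_0 = \R\mathbf{1}$ is already built into the statement. So the only real work is to verify the remaining hypothesis of Theorem \ref{exrationa}: that both $V$ and $V^{\C}$ enjoy the strong endomorphism property.

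For $V$ itself, this is essentially free: Theorem \ref{main1}(3) states that every VOA over $\R$ has the strong endomorphism property, using the classical Frobenius classification of finite-dimensional real division algebras inside the argument. For the scalar extension $V^{\C}$, I would argue in two stages. First, since $\C$ is uncountable, Theorem \ref{main1}(1) gives that $V^{\C}$ has (at least) the endomorphism property. Second, since $\C$ is algebraically closed, Proposition \ref{end-pro}(1) upgrades this to the statement that for every irreducible admissible $V^{\C}$-module $M$ one has $\End_{V^{\C}}(M) \cong \C$, which is one-dimensional and hence certainly finite-dimensional. So $V^{\C}$ has the strong endomorphism property as well.

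With all hypotheses of Theorem \ref{exrationa} in place, the biconditional $V$ rational $\Leftrightarrow$ $V^{\C}$ rational follows at once. There is no genuine obstacle here; the content of the corollary is really that the abstract criterion proved in Theorem \ref{exrationa} applies cleanly to the most natural case $\R \subset \C$, where one hypothesis (the strong endomorphism property over the base) comes from the Frobenius-type classification and the other (over the extension) comes from Schur's lemma in its classical algebraically-closed, uncountable form.
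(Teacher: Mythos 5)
Your proposal is correct and follows essentially the same route as the paper: invoke Theorem \ref{exrationa} with $\F=\R$, $\K=\C$, getting the strong endomorphism property of $V$ from Theorem \ref{main1}(3) and that of $V^{\C}$ from Proposition \ref{end-pro}(1). You are in fact slightly more careful than the paper in explicitly verifying the hypothesis of Proposition \ref{end-pro} (that $V^{\C}$ has the endomorphism property) via Theorem \ref{main1}(1) and the uncountability of $\C$, a step the paper leaves implicit.
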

\begin{proof}
	By Theorem \ref{main1} (3), any vertex operator algebra $V$ over $\R$ has the strong endomorphism property, and by Proposition \ref{end-pro} (1), $V^\C$ also has the strong endomorphism property. Moreover, the extension $\C/\R$ is clearly finite separable. Then by Theorem \ref{exrationa}, we have $V$ is rational if and only if $V^\C$ is rational.
	\end{proof}

\begin{corollary}
Let $\K/ \F$ be a finite separable extension, and $V$ be a simple strongly finitely generated vertex operator algebra over a field $\F$ such that $V_0=\F {\bf 1}$.
Then $V$ is rational if and only if $V^{\K}$ is rational.
\end{corollary}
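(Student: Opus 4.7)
The plan is to reduce this corollary directly to Theorem \ref{exrationa} by verifying its two hypotheses on strong endomorphism properties. The only two things that need to be checked are that $V$ has the strong endomorphism property and that $V^{\K}$ has the strong endomorphism property; once those are established, rationality transfers in both directions by Theorem \ref{exrationa}.

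First I would note that $V$ itself has the strong endomorphism property. This is immediate from Theorem \ref{main1}(4), since $V$ is assumed to be strongly finitely generated over $\F$.

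Next I would establish the same property for $V^{\K}$. By Proposition \ref{exstf}(2), since $V$ is strongly finitely generated over $\F$, its scalar extension $V^{\K}$ is strongly finitely generated over $\K$. Applying Theorem \ref{main1}(4) once more (now to $V^{\K}$ over $\K$), we conclude that $V^{\K}$ has the strong endomorphism property.

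Finally I would combine these with the given hypotheses: $V$ is simple with $V_0 = \F\mathbf{1}$, and $\K/\F$ is finite separable. All hypotheses of Theorem \ref{exrationa} are now satisfied, so $V$ is rational if and only if $V^{\K}$ is rational, completing the proof. There is no real obstacle here, as the corollary is simply the application of Theorem \ref{exrationa} to the class of strongly finitely generated VOAs, with Theorem \ref{main1}(4) supplying both endomorphism-property hypotheses automatically; this mirrors exactly the argument used in Corollary \ref{Rrational} for the case $\K/\F = \C/\R$.
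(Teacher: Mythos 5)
Your proposal is correct and follows essentially the same route as the paper: use Proposition \ref{exstf} to transfer strong finite generation to $V^{\K}$, invoke the strongly-finitely-generated case of the endomorphism theorem (the paper cites Corollary \ref{Quillen-voa}(1), which is just Theorem \ref{main1}(4) restated) to get the strong endomorphism property for both $V$ and $V^{\K}$, and then apply Theorem \ref{exrationa}. No gaps.
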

\begin{proof}
Note that $V^{\K}$ is also strongly finitely generated by Proposition \ref{exstf}.
By Corollary \ref{Quillen-voa} (1), both $V$ and $V^\K$ have the strong endomorphism property. Now the conclusion follows from Theorem \ref{exrationa}.
\end{proof}

\begin{theorem} \label{absimple}
Let $V$ be a strongly finitely generated vertex operator algebra over a field $\F$, and $M$ be an irreducible admissible  $V$-module.
Then the following conditions are equivalent:
\begin{enumerate}[{(1)}]

\item  $M$ is an absolutely irreducible $V$-module.

\item  $M(0)$ is an absolutely simple $A(V)$-module.

\item $\text{End}_V(M)=\text{End}_{A(V)}(M(0))=\F.$
\end{enumerate}
\end{theorem}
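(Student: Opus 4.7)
The plan is to close the triangle $(1) \Leftrightarrow (2) \Rightarrow (3) \Rightarrow (1)$, observing first that the inner equality $\text{End}_V(M) = \text{End}_{A(V)}(M(0))$ inside (3) is automatic from Corollary \ref{Anv3} for any irreducible admissible module, so the effective content of (3) is the single equation $D = \F$, where $D := \text{End}_{A(V)}(M(0))$. Applying Lemma \ref{lm3.6} together with the identification $A(V^{\K}) \cong A(V)^{\K}$ from Proposition \ref{Anvex} to every field extension $\K/\F$ converts irreducibility of $M^{\K}$ into simplicity of $M(0)^{\K}$, yielding $(1) \Leftrightarrow (2)$ at once.

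For $(2) \Rightarrow (3)$, I would pass to an algebraic closure $\bar{\F}$ of $\F$. Since $V^{\bar{\F}}$ remains strongly finitely generated by Proposition \ref{exstf} and $M(0)^{\bar{\F}}$ is simple by hypothesis, Schur's lemma from Corollary \ref{Quillen-voa}(2) combined with Corollary \ref{Anv3} applied over $\bar{\F}$ yields $\text{End}_{A(V^{\bar{\F}})}(M(0)^{\bar{\F}}) = \bar{\F}$. The natural $\bar{\F}$-algebra map $D \otimes_{\F} \bar{\F} \to \text{End}_{A(V^{\bar{\F}})}(M(0)^{\bar{\F}})$ is injective, since it is induced from the inclusion $D \hookrightarrow \text{End}_{\F}(M(0))$ and tensoring with a field is exact. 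Therefore $\dim_{\F} D = \dim_{\bar{\F}}(D \otimes_{\F} \bar{\F}) \leq 1$, forcing $D = \F$.

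For $(3) \Rightarrow (1)$, Lemma \ref{lm3.6} reduces the claim to showing that $M(0)^{\K}$ is simple over $A(V)^{\K}$ for every extension $\K/\F$. I will run the standard minimal-length argument: suppose $N$ is a nonzero proper $A(V)^{\K}$-submodule of $M(0)^{\K}$, and pick $0 \neq x = \sum_{i=1}^{r} k_i \otimes s_i \in N$ with $k_i \in \K$ linearly independent over $\F$ and $r$ minimal (this also forces the $s_i$ to be $\F$-linearly independent). Acting by any $a \in \text{ann}_{A(V)}(s_1)$ on $x$ and invoking minimality forces $a s_j = 0$ for all $j$; consequently the assignment $a s_1 \mapsto a s_j$ defines a nonzero element of $\text{End}_{A(V)}(M(0))$ (using $A(V) s_1 = M(0)$ from simplicity), which by (3) is a scalar in $\F$, making $s_j \in \F s_1$ and contradicting $\F$-linear independence unless $r=1$. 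But $r=1$ combined with $\K$-linearity of the action and the equality $A(V) s_1 = M(0)$ forces $N = M(0)^{\K}$, another contradiction.

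The main technical step is the minimal-length argument in $(3) \Rightarrow (1)$; the remaining implications are direct assemblies of Corollary \ref{Anv3}, Lemma \ref{lm3.6}, Proposition \ref{Anvex}, and Corollary \ref{Quillen-voa}(2), with the finite-dimensionality of $D$ supplied by the strong endomorphism property already established for strongly finitely generated VOAs.
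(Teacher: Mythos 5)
Your proposal is correct and follows essentially the same route as the paper: the same reduction via Lemma \ref{lm3.6} and Corollary \ref{Anv3}, the same passage to $\overline{\F}$ using Corollary \ref{Quillen-voa}(2) for $(2)\Rightarrow(3)$, and a density-type argument for $(3)\Rightarrow(1)$. The only local difference is that the paper invokes Jacobson's density theorem (choosing the $m_i$ linearly independent over $\F=\End_{A(V)}(M(0))$ to produce $1\otimes m_1\in N$ directly), whereas you prove the needed fact by hand with a minimal-length/annihilator argument; both are valid and of comparable length.
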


\begin{proof}
By Lemma \ref{lm3.6} and Corollary \ref{Anv3},
it is enough to prove that $M(0)$ is absolutely simple if and only if $\text{End}_{A(V)}(M(0))=\F.$
The following proof is similar to  \cite[Proposition 2.6.5]{D}.

Assume that $\text{End}_{A(V)}(M(0))=\F$.
We claim that $M(0)^{\K}$ is a simple $A(V)^{\K}$-module for any field extension $\K / \F$.
Indeed, let $N$ be a nonzero submodule of $M(0)^{\K}$.
Take a nonzero element $k_1 \otimes m_1+\cdots + k_p \otimes m_p \in N \ (k_i \in \K , m_i \in M(0),  p>0)$
such that $ m_1, \dots , m_p $ are linearly independent over $\F=\End_{A(V)}(M(0))$.
Since $M(0)$ is a simple $A(V)$-module,
by the Jacobson's density theorem, there exists some $v \in A(V)$ such that $[v]\cdot m_1=o(v)m_1=m_1$ and $[v] \cdot m_i=o(v)m_i=0$ for any $i \neq 1$.
Hence $o(k_1^{-1} \otimes v)(k_1 \otimes m_1+\cdots + k_p \otimes m_p)=1 \otimes m_1 \in N.$
Since $M(0)$ is a simple $V$-module, we have $M(0)=\{o(v)m_1 ~ | ~ v \in V\}.$
Therefore, we have $ M(0)^{\K} =N$, and $M(0)^{\K}$ is a simple $A(V)^{\K}$-module.

Now we assume that $M(0)$ is absolutely simple. For any  $f \in \text{End}_{A(V)}(M(0)),$
we need to show that $f= \lambda Id_{M(0)}$ for some $\lambda \in F$. Let $\overline{\F}$ be an algebraic closure of $\F$.
Then $\text{End}_{A(V)^{\K}}(M(0)^{\K})=\overline{\F}$ by Corollary \ref{Quillen-voa}.
On the other hand, $1 \otimes f \in \text{End}_{A(V)^{\K}}(M(0)^{\K}),$
so $1 \otimes f= \lambda \otimes Id_{M(0)}$ for some $\lambda \in \overline{\F}.$
This implies that $\lambda \in \F$ and $f=\lambda Id_{M(0)}.$ The proof is complete.
\end{proof}

\begin{remark}
The Theorem \ref{absimple} is useful, especially because it provides an intrinsic test for a module to be absolutely simple, which
involves only the computation of $\text{End}_V(M)$ or $\text{End}_{A(V)}(M(0))$, rather than the consideration of extension fields.
\end{remark}

\begin{corollary}
Let $V$ be a strongly finitely generated vertex operator algebra  over an algebraically closed field $\F$, and let $M$ be a simple admissible $V$-module.
Then $M$ is absolutely simple.

\end{corollary}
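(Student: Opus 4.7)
The plan is to deduce this directly from the two main tools already established: Corollary \ref{Quillen-voa} and Theorem \ref{absimple}. Since both results apply to any strongly finitely generated vertex operator algebra, the hypotheses on $V$ are exactly what is needed, and the role of $\F$ being algebraically closed will be used only in invoking Schur's lemma.

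First, I would invoke Corollary \ref{Quillen-voa}(2), which is the Schur's lemma for strongly finitely generated vertex operator algebras over algebraically closed fields. This corollary applies precisely because $V$ is strongly finitely generated over the algebraically closed field $\F$ and $M$ is an irreducible (i.e., simple) admissible $V$-module, and it yields the identification $\End_V(M) \cong \F$.

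Next, I would apply Theorem \ref{absimple}, which provides an intrinsic criterion for absolute simplicity of an irreducible admissible module over a strongly finitely generated vertex operator algebra. That theorem states the equivalence of the three conditions: (1) $M$ is absolutely irreducible; (2) $M(0)$ is absolutely simple as an $A(V)$-module; (3) $\End_V(M) = \End_{A(V)}(M(0)) = \F$. By the previous paragraph, condition (3) is satisfied, so condition (1) follows, and $M$ is absolutely simple, as desired.

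There is no genuine obstacle here, since all the substantive content has been established in earlier sections: the key inputs are Quillen's theorem for $A(V)$ (used to prove Schur's lemma for strongly finitely generated VOAs over algebraically closed fields) and the Jacobson density argument that drives Theorem \ref{absimple}. Once both are in hand, the corollary is essentially a one-line combination, and the proof will simply cite these two results in sequence.
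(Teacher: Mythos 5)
Your proposal is correct and matches the paper's intended argument exactly: the corollary is stated without proof as an immediate consequence of combining Schur's lemma (Corollary \ref{Quillen-voa}(2)) with the equivalence (3) $\Rightarrow$ (1) of Theorem \ref{absimple}, which is precisely what you do.
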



\begin{thebibliography}{}

\bibitem[A]{A}	
 T. Arakawa, A remark on the $C_2$-cofiniteness condition on vertex algebras, {\em Math. Z.} {\bf 270} (2012), no. 1-2, 559-575.


\bibitem[B]{B}J. P. Bell, Noetherian algebras over algebraically closed fields,
{\em J.Algebra} {\bf 310}, 2007, 148-155.


\bibitem[CJL]{CJL} A. Caradot, C. Jiang, and Z. Lin, {\em Yoneda algebras of the triplet vertex operator algebra}, arXiv:2204.01650.

\bibitem[CL]{CL}
Creutzig, T., A. Linshaw, Orbifolds of symplectic fermion algebras,
{\em Trans. Am. Math. Soc.} {\bf 369(1)}, 467-494 (2017).

\bibitem[D]{D} J. Dixmier,  \emph{Enveloping Algebras}, (The 1996 Printing of the 1977 English Translation)
Graduate Studies in Mathematics, Vol. II, American Mathematical Society, 1996.

\bibitem[DS]{DS}
G. Dospinescu, B. Schraen, Endomorphism algebras of admissible $p$-adic
representations of $p$-adic Lie groups, {\em Represent. Theory} {\bf 17} (2013), 237-246.

\bibitem[DJ]{DJ} C. Dong, C. Jiang, Bimodules associated to vertex operator algebras, {\em Math. Z. } {\bf 289} (2008), 799-826.

\bibitem[DLM]{DLM}
C. Dong, H. Li, and G. Mason,
Vertex operator algebras and associative algebras,
{\em J. Algebra.} {\bf 206} (1998), 67--96.

\bibitem[DLM1]{DLM1}
C. Dong, H. Li, and G. Mason,
Twisted representations of vertex operator algebras,
{\em Math. Ann.} {\bf 310} (1998), 571--600.





\bibitem[DR1]{DR1}
C. Dong, L. Ren, Representations of vertex operator algebras over an arbitrary field,
{\em J.Algebra} {\bf 403} (2014), 497-516.

\bibitem[DR2]{DR2}
C. Dong, L. Ren, Vertex operator algebras associated to the Virasoro algebra over an arbitrary field,
{\em Trans. Amer. Math. Soc.} {\bf 368} (2016), no. 7, 5177-5196.


\bibitem[FZ]{FZ}I. Frenkel, Y. Zhu, Vertex operator algebras associated to representations of affine and Virasoro algebras,
{\em Duke Math. J.} {\bf 66} (1992), 123-168.

\bibitem[JLM]{JLM}
X. Jiao, H. Li, Q. Mu,  Modular Virasoro vertex algebras and affine vertex algebras,  {\em J. Algebra} {\bf 519} (2019), 273-311.


\bibitem[L1]{L1}
H. Li, Abelianizing vertex algebras, {\em Comm. Math. Phys.}
	{\bf 259}, 391-411 (2005).

\bibitem[L2]{L2}
H. Li, Some finiteness properties of regular vertex operator algebras, \emph{J. Algebra} {\bf212}(1999), 495-514

\bibitem[LA]{LA} A. Linshaw, Invariant subalgebras of affine vertex algebras, {\em Adv. Math.} {\bf 234} (2013), 61-84.


\bibitem[Liu]{Liu} J. Liu, On Filtrations of A(V), arXiv:2103.08090.


\bibitem[LL]{LL}
J. Lepowsky, H. Li, Introduction to Vertex Operator Algebras and Their Representations,
{\em Progress in Mathematics}, Vol. {\bf 227}, Springer, 2004.
	


\bibitem[LM1]{LM1}	
H, Li; Q, Mu, Heisenberg VOAs over fields of prime characteristic and their representations,
{\em Trans. Amer. Math. Soc.} {\bf 370} (2018), no. 2, 1159-1184.

\bibitem[LM2]{LM2}
H, Li; Q, Mu, Symmetric invariant bilinear forms on modular vertex algebras,
{\em J. Algebra} {\bf 513} (2018), 435-465.	

\bibitem[Lang]{Lang}S. Lang, Algebra, 3rd ed., Vol {\bf 211}, Springer-Verlag, 2002.

\bibitem[MR]{MR} J.C. McConnell and J.C. Robson, Noncommutative Noetherian Rings,
Graduate Studies in Mathematics, Vol. {\bf 30}, American Mathematical Society, (2001).

\bibitem[M]{M} M. Miyamoto, A new construction of the Moonshine vertex operator algebra over the real number field,
{\em Ann. of Math.} (2) {\bf 159} (2004), 535-596.



\bibitem[Q]{Q}	
D. Quillen, On the endomorphism ring of a simple module over an enveloping algebra.
{\em Proc. Amer. Math. Soc.} {\bf 21} (1969), 171-172.
	
\bibitem[R]{R}	
L. Ren, Modular $A_n(V)$ theory. {\em J. Algebra.} {\bf 485} (2017), 254-268.

\bibitem[H]{H}
 T. W. Hungerford, Algebra, {\em Graduate Texts in Mathematics,} vol. {\bf 73}. Springer, Berlin (2003).


\bibitem[W]{W}
W. Wang. Rationality of Virasoro Vertex Operator Algebra, {\em Int. Math. Res. Not.} {\bf 7} (1993), 197-211.


\bibitem[Z]{Z} Y. Zhu; \emph{Modular invariance of characters of
vertex operator algebras.} J. Amer. Math. Soc. {\bf 9} 1996., 237-302.
\end{thebibliography}
\end{document}